\DeclareMathAlphabet{\itbf}{OML}{cmm}{b}{it}
\DeclareMathAlphabet\mathbfcal{OMS}{cmsy}{b}{n}
\renewcommand{\hat}{\widehat}
\renewcommand{\tilde}{\widetilde}
\def\RR{\mathbb{R}}
\def\bx{{{\itbf x}}}
\def\be{{\itbf e}}
\def\bU{{\itbf U}}
\def\bQ{{\itbf Q}}
\def\bV{{\itbf V}}
\def\bI{{\itbf I}}
\def\bD{{\itbf D}}
\def\bA{{\itbf A}}
\def\bB{{\itbf B}}
\def\bU{{\itbf U}}
\def\bI{{\itbf I}}
\def\bR{{\itbf R}}
\def\be#1\ee{%
    \begin{equation}%
    #1%
    \end{equation}%
}
\def\12{{\frac{1}{2}}}
\def\gam{{\boldsymbol \gamma}}
\def\Beta{{\boldsymbol \beta}}
\renewenvironment{proof}{{\it Proof.}}{\hfill\qedsymbol}
\newcommand{\qedsymbol}{$\square$}
\newcommand{\CC}{{\mathbb{C}}}
\def\be#1\ee{\begin{equation}#1\end{equation}}
\newcommand\hgam{\boldsymbol{\hat\gamma}}
\renewcommand\gam{\boldsymbol{\gamma}}
\newcommand{\Alpha}{\boldsymbol \alpha}
\renewcommand{\Beta}{\boldsymbol \beta}
\newcommand{\hKappa}{\hat{\boldsymbol \kappa}}
\renewcommand{\Xi}{\boldsymbol \xi}
\newcommand\blF{ {\cal F}}
\newcommand\bbF{ {\mathbf{F}}}
\newcommand\blTF{ {\tilde {\cal F}}}
\newcommand\blC{{\cal C}}
\newcommand\blU{{\cal U}}
\newcommand{\blUi}[2]{\blU_{#1}^{#2}}
\def\bm{\mathbf{m}}
\def\bT{\mathbf{T}}
\DeclareMathOperator*{\argmin}{arg\,min}
\DeclareFontFamily{U}{mathx}{\hyphenchar\font45}
\DeclareFontShape{U}{mathx}{m}{n}{
 <5> <6> <7> <8> <9> <10>
 <10.95> <12> <14.4> <17.28> <20.74> <24.88>
 mathx10
 }{}
\DeclareSymbolFont{mathx}{U}{mathx}{m}{n}
\DeclareMathAccent{\widecheck}{0}{mathx}{"71}
\DeclareMathAccent{\wideparen}{0}{mathx}{"75}
\definecolor{red}{rgb}{0,0,0}
\definecolor{purple}{rgb}{0,0,0}
\begin{document}

\title{A Krylov projection algorithm for large symmetric matrices with dense spectra}

\titlerunning{Krylov projection for large matrices} 

\author{
 Vladimir Druskin \and J\"orn Zimmerling
}


\institute{J\"orn Zimmerling, \at
 Institutionen f\"or informationsteknologi, Uppsala Universitet, L\"agerhyddsv\"agen 2, 75237 Uppsala, Sweden. \\
 \email{jorn.zimmerling@it.uu.se} 
 \and
 Vladimir Druskin, \at
 Worcester Polytechnic Institute, USA.
		\email{vdruskin@wpi.edu} 
}

\date{\today}

\maketitle

\begin{abstract}
We consider the approximation of $B^T (A+sI)^{-1} B$ for large s.p.d. $A\in\RR^{n\times n}$ with dense spectrum and $B\in\RR^{n\times p}$, $p\ll n$. We target the computations of Multiple-Input Multiple-Output (MIMO) transfer functions for large-scale discretizations of problems with continuous spectral measures, such as linear time-invariant (LTI) PDEs on unbounded domains. Traditional Krylov methods such as the Lanczos or CG algorithm are known to be optimal for the computation of $(A+sI)^{-1}B$ with real positive $s$, resulting in an adaptation to the distinctively discrete and nonuniform spectra. However, the adaptation is damped for matrices with dense spectra. It was demonstrated in \cite{zimmerling2025monotonicity} that averaging Gau{\ss} and Gau\ss -Radau quadratures computed using the block-Lanczos method significantly reduces approximation errors for such problems. Here we introduce an adaptive Kreĭn-Nudelman extension to the (block) Lanczos recursions, allowing further acceleration at negligible $o(n)$ cost. Similar to the Gau\ss -Radau quadrature, a low-rank modification is applied to the (block) Lanczos matrix. However, unlike the Gau\ss -Radau quadrature, this modification depends on $\sqrt{s}$ and can be considered in the framework of the Hermite-Pad\'e approximants, which are known to be efficient for problems with branch-cuts, that can be good approximations to dense spectral intervals. 
Numerical results for large-scale discretizations of heat-diffusion and quasi-magnetostatic Maxwell’s operators in unbounded domains confirm the efficiency of the proposed approach. 

\keywords{Block Lanczos \and Quadrature \and Transfer function \and Kreĭn-Nudelman \and Hermite-Pad\'e}
\subclass{65F10 \and 65N22 \and 65F50 \and 65F60}
\end{abstract}

\section{Introduction}

\subsection{Problem statement}

Let $A=A^T\in\RR^{n\times n}$ be a symmetric positive definite (s.p.d.) matrix and let $B$ be a tall matrix with $B\in\RR^{n\times p}$, with $p\ll n$. We refer to $p$ as the block size. We want to approximate the square multi-input multi-output (MIMO) transfer function
\be \label{eq:prob1}
\blF(s) =B^T(A+sI)^{-1}B,
\ee
for $s\in\CC$ outside the spectrum $A$. Generally, $\blF(s)\in\CC^{p\times p}$ is a complex symmetric matrix-valued function and it becomes s.p.d. for real positive $s$.
We compute \eqref{eq:prob1}
via approximations of $(A+sI)^{-1}B$ on a block Krylov subspace 
\[
\mathcal{K}_m(A,B)={\rm blkspan} \{B, AB, A^2 B , \dots A^{m-1} B \}
\]
where $ \rm blkspan$ means that the whole space range $([B, AB, A^2 B , \dots A^{m-1} B])$ is generated.
To simplify notation, without loss of generality, we assume that the matrix $B$ has orthonormal columns.
Conventionally, the (block) Krylov subspace approximations of \eqref{eq:prob1} are computed via (block)-Gau\ss ian quadratures, e.g., \cite{GM10,RRT16,lot2013}. The efficiency of such an approach follows from the optimality of the Lanczos approximations for $s\in\RR_+$ yielding adaptation to coarse nonuniform spectral distributions, however, this property weakens on intervals of dense uniform spectra. An error improvement of about one order of magnitude for a given $m$ was obtained in \cite{zimmerling2025monotonicity} by averaging Gau{\ss} and Gau\ss -Radau quadratures. Combinations of such quadratures were used for tight two-sided bounds \cite{lot2008,Meurant2023}, such that averaging naturally leads to a more accurate estimate. In \cite{zimmerling2025monotonicity} this approach was rigorously justified for the SISO case for problems with a continuous spectrum. 

The derivation of block-Gau\ss -Radau formulas in \cite{zimmerling2025monotonicity} was based on the connection of the block-Lanczos algorithm to the block extension of discrete Stieltjes strings and block-Stieltjes continued fraction. The scalar variant of this connection was introduced in the seminal work by Mark Kreĭn in the 1950s,.e.g., see \cite{Krein1967,Krein1952,Krein1947}. An extension of this approach for problems with a continuous spectrum was introduced by Kreĭn and Nudelman \cite{KreinNudelman1989} by adding a damper at the end of the Stieltjes string. Here we use this approach by adaptively modifying the block-Lanczos recursion to move poorly convergent Ritz value to the other Riemann sheet, while still matching the same number of spectral moments as in the conventional block-Lanczos method. The intuition behind such a modification is the description of continuous spectra via the scattering poles or resonances in unbounded domains \cite{Zworski}. Such poles give a compact representation of the dense interval of the spectral measure allowing spectral adaptation where classical Krylov methods fail. A mathematical foundation explanation behind this phenomenon is that the conventional quadrature algorithms are the Pad\'e approximations while the Kreĭn-Nudelman extension is closely related to the Hermite-Pad\'e approximation, and the latter is known to be superior to the former for problems with branch cuts.  We focus on applications stemming from diffusive electromagnetic problems in unbounded domains.

The remainder of the article is organized as follows: we review the basic properties of the block-Gau{\ss} quadrature computed via the block-Lanczos Algorithm, its connection to Stieltjes strings and representation via truncated Stieltjes-matrix continued fraction in section~\ref{sec:BlLanc}; In section~\ref{sec:Krein-Nudelman1} we extend block Stieltjes string and continued fraction to, respectively, Kreĭn-Nudelman strings and continued fractions, and then revert back the Kreĭn-Nudelman extension in terms of the Lanczos block-tridiagonal matrix. Then we examine the adaptive choice of the Kreĭn-Nudelman parameters, optimizing the regularity of the spectral density, show that Gau{\ss} and Gau\ss -Radau quadratures are two limiting cases of the Kreĭn-Nudelman extension, and derive a two-sided error bound for the latter. In section~\ref{sec:Hermite-Pade} we develop an adaptive spectral optimization algorithm for determining Kreĭn-Nudelman parameters. Then we bound the error of our approximation via the product of the Chebyshev error and the error of measure approximation on the spectral interval, which shows that using a continuous spectrum in our approximant accelerates convergence if the underlying problem has a continuous spectrum. The adaptive spectral weight used in optimization functional allows us to extend the reasoning (to some degree) to non-uniformly distributed discrete spectral measures with clustered intervals. The derivations of this section are done in a general matrix function framework, thus showing that both our algorithm and analysis can be extended from \eqref{eq:prob1} to the regular on $A$' s spectrum arbitrary matrix functions. Last, numerical examples are presented in section~\ref{sec:NumEx}.

\subsection{Notation}\label{sec:note}
Given two square symmetric matrices $G_1, G_2$ we use the notation
$G_1<G_2$ to mean that
the matrix $G_2-G_1$ is positive definite. 
A sequence of matrices $\{G_m\}_{m\ge 0}$ is said to be monotonically
increasing (resp. decreasing) if $G_m < G_{m+1}$ (resp. $G_{m+1}<G_m$)
for all $m$.
Positive-definite $p \times p$ matrices are denoted by Greek 
letters $\Alpha,\Beta,\gam$ and matrix-valued functions by calligraphic capital letters such as $\blC(s)$ or $\blF(s)$. 
Last, for $\Alpha, \Beta \in \mathbb{R}^{p\times p}$ and $\Beta$ is nonsingular,
{ we use the notation $\frac{\Alpha}{\Beta}:= \Alpha \Beta^{-1}$ (right inversion).}
The matrix $E_k \in \mathbb{R}^{mp\times p}$ has zero elements except for the
$p\times p$ identity matrix in the $k$-th block, $E_k =[0,\ldots, 0,I, \ldots, 0]^T$.

\section{Block Gau{\ss}ian Quadratures, Stieltjes strings and continued fractions}\label{sec:BlLanc}

 {\begin{center}
\begin{minipage}{.65\linewidth}
 \begin{algorithm}[H]
\caption{Block Lanczos iteration}\label{alg:blockLanc}
\begin{algorithmic}
\normalsize
\State Given $m$, $A\in{\mathbb R}^{n\times n}$ s.p.d., 
$B\in{\mathbb R}^{n\times p}$ with orthonormal columns 
\State $Q_1 =B$
\State $W = AQ_1$
\State $\Alpha_1 = Q_1^T W$
\State $W = W - Q_1 \Alpha_1$
\For{$i= 2,\dots, m$} 
 	\State $Q_i\Beta_{i}=W$ \hskip 0.5in [QR decomposition of $W$]
	\State $W = AQ_i- Q_{i-1}\Beta_{i}^T$
 	\State $\Alpha_i = Q_i^T W$
 	\State $W = W - Q_i \Alpha_i$
\EndFor 
\end{algorithmic}
 \end{algorithm}
\end{minipage}
\end{center}}
\vspace{0.5cm}

Let us assume that we can perform $m$ steps, with $mp \le n$, of the block Lanczos iteration (Algoritm~\ref{alg:blockLanc}) without breakdowns 
or deflation \cite{O'Leary1980,GOLUB1977}. As a result, 
the orthonormal block vectors $Q_i\in \mathbb{R}^{n \times p}$ form 
the matrix ${\boldsymbol Q}_m=[Q_1, \ldots, Q_m]\in \mathbb{R}^{n\times mp}$, whose columns
contain an orthonormal basis for the block Krylov subspace
\[\mathcal{K}_m(A,B)={\rm blkspan} \{B, AB, A^2 B , \dots A^{m-1} B \}.\]
The Lanczos iteration can then be compactly written as
\be\label{eq:LancRel}
A{\boldsymbol Q}_m={\boldsymbol Q}_m T_m + Q_{m+1} \Beta_{m+1}E_m^T, 
\ee
where $T_m$ is the symmetric positive definite block tridiagonal matrix 
\be\label{eq:T}
T_m=
\begin{pmatrix}
\Alpha_1 	& \Beta_2^T 	& {}		&{}			& {}& {}& {}\\
\Beta_2 	& \Alpha_2	& \Beta_3^T	&{}			& {}& {}& {}\\
{}			& \ddots 	& \ddots 	& \ddots 	& {}& {}& {}\\
{}			& {}		& \Beta_{i}& \Alpha_i & \Beta_{i+1}^T & {}& {}\\
{}			& {}		&{}			& \ddots 	& \ddots 	& \ddots& {}\\
{}			& {}		&{}			& {}	& \Beta_{m-1}& \Alpha_{m-1}& \Beta_m^T\\
{}			& {}		&{}			& {} 	& {} 	& \Beta_m& \Alpha_m\\
\end{pmatrix} ,
\ee
and $\Alpha_i, \Beta_i \in\mathbb{R}^{p \times p}$ are the block coefficients
in Algorithm~\ref{alg:blockLanc}. 

Using the Lanczos decomposition, $\blF(s)$ can be approximated as
\be\label{eq:blapprox}
 \blF(s)\approx \blF_m(s)= E_1^T(T_m+sI)^{-1}E_1.
\ee
This approximation is known as a block Gau{\ss} quadrature rule.

The simple Lanczos algorithm using only three-term recursions without re-orthogonalization is known to be unstable due to computer round-offs. The instability is manifested by the loss of orthogonality of the Lanczos vectors and the appearance of spurious copies of the Lanczos eigenvalues. However, this instability only slows down the convergence speed to some degree and does not affect the accuracy of the final converged result \cite{Knizhnerman1996TheSL}.

To obtain a representation as a block-extension of Stieltjes strings we represent the $T_m$ (see equation \eqref{eq:T}) in a block $LDL^T$ decomposition
\begin{equation}\label{eq:defT}
T_{m} :=(\widehat{\boldsymbol K}_{m}^{-1})^T {J}_{m} \boldsymbol\Gamma^{-1}_m {J}_{m}^T \widehat{\boldsymbol K}_{m}^{-1}
\end{equation} 
where $\hKappa_{i}\in\RR^{p\times p}$, $\hKappa_1=I_p$ and $\gam_{i}\in\RR^{p\times p}$ all full rank, and
\[
{J}_{m}^T = 
\begin{bmatrix}
I_p & -I_p & ~ & ~ \\
~ & \ddots& \ddots & ~ \\
 ~& ~& \ddots & -I_p \\
 ~ & ~&~& I_p 
\end{bmatrix}\in\RR^{pm\times pm},
	\begin{array}{ll}
\widehat{\boldsymbol K}_{m}&={\rm blkdiag}(\hKappa_{1},\dots,\hKappa_{m})\\
{\boldsymbol \Gamma}_m&={\rm blkdiag}(\gam_{1},\dots,\gam_{m}) ,
\end{array}
\]
and
$\Alpha_1=(\hKappa_{1}^{-1})^T\gam_1^{-1}\hKappa_{1}^{-1}=\gam_1^{-1}$, 
$\Alpha_i=(\hKappa_{i}^{-1})^T(\gam_{i-1}^{-1}+\gam_{i}^{-1})\hKappa_{i}^{-1}$ and 
$\Beta_i= { -(\hKappa_{i}^{-1})^T \gam_{i-1}^{-1} \hKappa_{i-1}^{-1}}$ for $i=2,\ldots,m$. 

The matrices $\gam_i$ and $\hKappa_{i}$ can be computed directly during the block-Lanczos recursion using the coefficients $\Alpha_i$'s and $\Beta_i>0$'s. 
 
This is reported in Algorithm~\ref{alg:ExtractGam}, given in Appendix~A, and previously derived for a different parametrization in \cite{ZaslavskySfraction}.

We first convert $T_m$ to pencil form using the factors introduced in \eqref{eq:defT}. To this end, we introduce the matrices $\hgam_j$
\be\label{eq:decompose}
\hgam_j =\hKappa_j^T \hKappa_j, \quad j=1,\ldots,m.
\ee
The matrices $\gam_j$ and $\hgam_j$ are known as the Stieltjes parameters, and they are both s.p.d. if block Lanczos runs without breakdown.

Let $ Z_m:= {J}_m \Gamma_m^{-1} {J}_{m}^T$ and $\widehat{\boldsymbol 
 \Gamma}_m={\rm blkdiag}(\hgam_{1},\dots,\hgam_{m})$.
Then, due to the initial condition $\hgam_1=I_p$, the function $\blF_m$ approximating ${\cal F}=B^T (A+sI)^{-1}B$ can be rewritten using the pencil form $(Z_m,\widehat{\boldsymbol \Gamma}_m)$, that is
\be\label{eq:Gauss }
\blF_m(s)=E_1^T(T_m+sI)^{-1}E_1= E_1^T( Z_m+s \widehat{\boldsymbol 
 \Gamma}_m)^{-1}E_1.
\ee

Thus 
$\blF_m(s)$ corresponds to the first $p\times p$
block of the solution ${ U}_m$ of the linear system
\be\label{eq:LAform}
( Z_m+s \widehat{\boldsymbol \Gamma}_m){ U}_m =E_1,
\ee
with block column vector ${ U}_m = [\blU_1; \ldots; \blU_m]$, with $\blU_i$ a $p\times p$ block.

Appending ${ U}_m $ by ``boundary" blocks $\blU_0,\,\blU_{m+1}\in\CC^{p\times p}$ and introducing a fictitious $\gam_0=I_p$ we rewrite \eqref{eq:LAform} as a finite-difference Stieltjes string
\begin{eqnarray}
\frac 1 \gam_0\left(\blU_1 - \blU_0\right) &=&-I_p \label{eqn:line1}\\
\frac 1 {\gam_{i-1}}\left(\blU_{i} -\blU_{i-1}\right) 
 - \frac 1 {\gam_{i}} \left(\blU_{i+1}-\blU_{i}\right)+s\hat\gam_i \blU_i
 &=&0, \quad
i=1,\ldots, m\label{eq:linei}\\
\blU_{m +1}&=& 0. \label{eqn:linem}
\end{eqnarray}
where the first line can be interpreted as a block Neumann condition and the last as a block Dirichlet condition.

For $p=1$ Kreĭn \cite{Krein1952} interpreted the system (\ref{eqn:line1}-\ref{eqn:linem}) as a so-called Stieltjes string
\be\label{eq:ss} -u_{xx}+s\mathbf{m}_m(x)u=0 \ee on a positive interval $[0,x_{m+1}]$, with boundary conditions $u_x(0)=-1$, $u(x_{m+1})=0$, and discrete mass distribution
\be\label{eq:mass} \mathbf{m}_m(x)=\sum_{i=1}^m\hgam_i\delta(x-x_i),\ee
where $x_1=0$, $x_{i+1}=x_i+\gam_i$, $i=1,\ldots, m$,
with $\blU_i=u(x_i)$. See \cite{Zimmerling_KreinNudel} for a more detailed derivation.
 The discrete Sieltjes string interpretation is fundamental for the derivation of our algorithm.\footnote{The Stieltjes parameters $\gam_{i}$ and $\hgam_i$ can be viewed as the primary and dual steps in a more detailed finite-difference interpretation \cite{druskin1999Gaussian} not used here.}
 
The eigendecomposition of $T_m$ allows an interpretation of $\blF_m(s)$ as block Gau\ss ian quadrature \cite{MeurantBook}. Here, we interpret it as the transfer function or Neumann-to-Dirichlet map of the matrix Stieltjes string. 
\section{Block Kreĭn-Nudelman extension}\label{sec:Krein-Nudelman1}

 { Extending this analogy, for problems arising from differential equations on a semi-infinite interval ($\RR_+$), Kreĭn and Nudelman introduced to the Stieltjes string a finite-difference variant of the Sommerfeld absorbing condition. We write such a condition for $p\ge 1$ in the form
\be\label{eq:sommerfeld}
({\sqrt{s}}{\phi} +\varphi ) \blU_{m+1}= - \frac 1 {\gam_{m}} (\blU_{m+1}-\blU_{m}),
\ee
replacing \eqref{eqn:linem},
where $\phi,\varphi \in\RR^{p\times p}$ are some s.p.d. matrix parameters whose choice we discuss later.  
Returning to a slightly modified original interpretation of \cite{KreinNudelman1989} for $p=1$, let us extend \eqref{eq:ss} from $[0,x_{m+1}]$ to a problem $[0,\infty)$ by an equation for the first spherical harmonic of the Helmholtz equation on $k$-dimensional sphere
\be\label{eq:ssKN} 
-(\sigma(x) u_x)_x+s\hat {\mathbf m}_m u=0, 
\ee
with the same boundary condition $u(0)_x=-1$ infinity condition $u(\infty)=0$ for $s\notin \RR_+$,
here $\sigma(x)= 1$ for $x\in[0,x_{m+1}]$ and $\sigma(x)=(x/x_{m+1})^{k-1}$ otherwise. Further we replace $\mathbf{m}_m$ given by \eqref{eq:mass} 
with 
\be\label{eq:masssinf} \hat {\mathbf{m}}_m=\mathbf{m}_m+(x/x_{m+1})^{k-1}\phi^{2}\eta(x-x_{m+1}),\ee
where  $\eta(x-x_{m+1})$ is the Heaviside step function.
Parametrization \eqref{eq:masssinf} yields outgoing waves solutions \eqref{eq:ssKN} on $[x_{m+1},\infty]$
 $u(x)=\blU_{m+1} \frac{e^{-\sqrt{s}{\phi} x}}{x^{\frac{k-1}{2}}}$.
Then condition 
\be\label{eq:sommerfeldE} ({\sqrt{s}}{\phi}+\varphi) u|_{x=x_{m+1}}=-u_x|_{x=x_{m+1}}\ee
truncates domain $[0,\infty]$ to $[0,x_{m+1} ]$, which for $\lim_{x\to\infty}\varphi =0$ is equivalent to the Sommerfeld radiation condition for outgoing waves and \eqref{eq:sommerfeld} is a finite-difference approximation to \eqref{eq:sommerfeldE} with a one sided derivative approximation.
Then for $k=1$ we find $\varphi=0$ and in general $\varphi=\frac{(k-1)}{2 x_{m+1}}$, satisfying the radiation condition.


We denote
\[\hat\blF_m^{\phi,\varphi}(s)= \blU_1\]
corresponding to Eqs.~(\ref{eqn:line1},\ref{eqn:linem},\ref{eq:sommerfeld}) and call $\hat \blF_m^{\phi,\varphi}(s)$ the Kreĭn-Nudelman quadrature. 

Thus, returning to the general case $p\ge 1$ and
using \eqref{eq:sommerfeld} we eliminate $\blU_{m+1}$ from the $m$-th equation of
\eqref{eq:linei} 
and obtain $$\frac{1} {\gam_{m-1}}\left(\blU_{m} -\blU_{m-1}\right) 
 - \frac 1 {\gam_{m}} \left(\left[\frac {1} {\gam_{m}}+\varphi+{\sqrt{s}}{\phi}\right]^{-1}\frac {1} {\gam_{m}}-I\right)\blU_{m}+s\hat\gam_m \blU_m=0.
 $$
 Combining this with the remaining equations of \eqref{eq:linei} and symmetrizing the obtained matrix pencil, we obtain
 \be\label{eq:KreinNudelman} 
 \hat\blF_m^{\phi,\varphi}(s)=E_1^T(\hat T_m^{\phi,\varphi}(s)+sI)^{-1}E_1,
 \ee
 where $\hat T_m^{\phi,\varphi}(s)$ coincides with $T_m$ except last diagonal element
\be\label{eq:hatal}\hat \Alpha_m^{\phi,\varphi}(s)=\Alpha_m-(\hKappa_{m})^{-T}\gamma_m^{-1}(\gamma_m^{-1}+\varphi+\sqrt{s}\phi)^{-1}\gamma_m^{-1}( \hKappa_{m})^{-1}.\ee
By construction, \eqref{eq:hatal} yields a symmetric $\hat \Alpha_m$ and  $ \lim_{\phi\to\infty} \hat\blF_m(s)^{\phi,\varphi}(s)=\blF_m(s)$. The other limiting   corresponds to the Gau\ss -Radau quadrature $\blTF_{m}(s)$ given by $ \lim_{\phi,\varphi\to 0} \hat\blF_m^{\phi,\varphi}(s)=\blTF_m(s)$ as defined in \cite{zimmerling2025monotonicity}.
There, it was shown that $\forall s\in\RR_+$
 \be\label{bound:GR} \ldots \blF_{m-1}(s)<\blF_{m}(s)\le \blF(s)\le\blTF_{m}(s)< \blTF_{m-1}\ldots, \ee.
The following proposition extends this bound to the Kreĭn-Nudelman quadrature and shows, that, similar to Gau{\ss} and Gau\ss -Radau quadrature, it is a Stieltjes function; however, unlike those, it is not meromorphic. 
 \begin{proposition}\label{prop:main}
 For $0< \phi,\varphi <\infty $ 
 \begin{enumerate}
 \item $\hat \blF^{\phi,\varphi}_m(s)$ is a Stieltjes function with the branch-cut on $\RR_-$;
 \item $\forall s\in\RR_+$
 \be\label{in:KN} \blF_{m}(s)\le \hat \blF^{\phi,\varphi}_m(s)\le\blTF_{m}(s).\ee
 \end{enumerate}
 
 \end{proposition}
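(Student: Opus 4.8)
The plan is to absorb the entire $s$–dependence of $\hat T_m^{\phi,\varphi}(s)$ into a single rank–$\le p$ correction of the last diagonal block. Setting
\[ D(s):=\Alpha_m-\hat\Alpha_m^{\phi,\varphi}(s)=(\hKappa_m)^{-T}\gam_m^{-1}\bigl(\gam_m^{-1}+\varphi+\sqrt s\,\phi\bigr)^{-1}\gam_m^{-1}(\hKappa_m)^{-1}, \]
equation \eqref{eq:hatal} reads $\hat T_m^{\phi,\varphi}(s)+sI=(T_m+sI)-E_mD(s)E_m^T$, so that $\hat\blF_m^{\phi,\varphi}$ is the $E_1$–compression of the inverse of a matrix differing from the Gau\ss{} matrix $T_m+sI$ only in the block $D(s)$. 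Both claims then reduce to properties of $D(s)$: the ordering of its values for real $s$ (part 2), and the sign of its imaginary part for $s\in\CC^+$ (part 1).

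For part (2), fix $s>0$, so $\sqrt s\ge0$. Since $\phi,\varphi$ are s.p.d. we have $\gam_m^{-1}\preceq\gam_m^{-1}+\varphi+\sqrt s\,\phi$, hence by antitonicity of inversion $0\preceq(\gam_m^{-1}+\varphi+\sqrt s\,\phi)^{-1}\preceq\gam_m$, and conjugating by $\gam_m^{-1}(\hKappa_m)^{-1}$ gives $0\preceq D(s)\preceq D_{\mathrm{GR}}$ with $D_{\mathrm{GR}}=(\hKappa_m)^{-T}\gam_m^{-1}(\hKappa_m)^{-1}$ the correction of the Gau\ss--Radau matrix $\tilde T_m=\lim_{\phi,\varphi\to0}\hat T_m^{\phi,\varphi}$. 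Therefore
\[ \tilde T_m+sI\;\preceq\;\hat T_m^{\phi,\varphi}(s)+sI\;\preceq\;T_m+sI, \]
all three being s.p.d. for $s>0$ (the smallest because $\tilde T_m\succeq0$, the Gau\ss--Radau matrix of the cited bound \eqref{bound:GR}). As $X\mapsto E_1^TX^{-1}E_1$ is antitone on the s.p.d. cone — inversion reverses the Loewner order and $E_1$–compression preserves it — applying it reverses the chain and yields exactly \eqref{in:KN}; the two ends are the limits $\phi\to\infty$ and $\phi,\varphi\to0$ identified above, i.e. $\blF_m$ and $\blTF_m$.

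For part (1), I would first confine the singularities to $\RR_-$ by proving $N(s):=\hat T_m^{\phi,\varphi}(s)+sI$ invertible on $\CC\setminus\RR_-$. On the principal sheet $\sqrt s$ is holomorphic there and $\mathrm{Im}\sqrt s>0$ when $\mathrm{Im}\,s>0$; thus $\gam_m^{-1}+\varphi+\sqrt s\,\phi$ has imaginary part $(\mathrm{Im}\sqrt s)\phi\succ0$, so upon inversion $\mathrm{Im}\,D(s)\preceq0$ and $\mathrm{Im}\,N(s)=(\mathrm{Im}\,s)I-E_m(\mathrm{Im}\,D(s))E_m^T\succ0$. A matrix with positive–definite imaginary part is nonsingular, giving invertibility on $\CC^+$, on $\CC^-$ by $N(\bar s)=\overline{N(s)}$, and on $(0,\infty)$ by part (2); hence $\hat\blF_m^{\phi,\varphi}$ is holomorphic off $\RR_-$. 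The Stieltjes property then follows from $\mathrm{Im}(N^{-1})=-N^{-*}(\mathrm{Im}\,N)N^{-1}\preceq0$ on $\CC^+$, which after $E_1$–compression gives $\mathrm{Im}\,\hat\blF_m^{\phi,\varphi}\preceq0$, while part (2) gives $\hat\blF_m^{\phi,\varphi}\succeq0$ on $\RR_+$; together with holomorphy off $\RR_-$ and the symmetry $\hat\blF_m^{\phi,\varphi}(\bar s)=\overline{\hat\blF_m^{\phi,\varphi}(s)}$ these are the hypotheses of the integral representation theorem for matrix Stieltjes functions, so $\hat\blF_m^{\phi,\varphi}(s)=\Gamma_0+\int_0^\infty(\lambda+s)^{-1}\,d\mathbf M(\lambda)$ with $\Gamma_0\succeq0$ and $\mathbf M$ a nondecreasing p.s.d. matrix measure. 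The cut is a genuine branch cut, not a pole set, because of the $\sqrt s$ inherited from the absorbing end, so the function is not meromorphic.

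The imaginary–part bookkeeping is routine once ordered as above. The step I expect to be the main obstacle is invoking the matrix/operator Stieltjes representation theorem with the right hypotheses: the scalar characterization (holomorphic off $\RR_-$, nonnegative on $\RR_+$, $\mathrm{Im}\le0$ on $\CC^+$) must be used in its Loewner–ordered matrix form, and one must check that positivity on $\RR_+$ genuinely forces the absence of a linear term and $\Gamma_0\succeq0$. A cleaner but more laborious alternative is to realize $N(s)^{-1}$ as an $E_1$–compression of the resolvent of the self–adjoint nonnegative operator obtained by attaching to the finite string the semi–infinite constant–coefficient tail of \eqref{eq:ssKN} whose admittance is exactly $\sqrt s\,\phi+\varphi$; its continuous spectrum $[0,\infty)$ produces the branch cut and the spectral theorem yields the representation directly, at the cost of making this dilation rigorous for the block case $p>1$.
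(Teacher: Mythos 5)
Your proof is correct, but it follows a genuinely different route from the paper's. The paper proves both claims through the Stieltjes continued-fraction representation \eqref{eq:S-fraction2}: statement 1 by observing that the recursion blocks $\blC_i$ are M\"obius transforms, which compose to map the Stieltjes function $(\varphi+\phi\sqrt{s})^{-1}$ to another Stieltjes function with the same branch cut; statement 2 by the monotonicity of each $\blC_i$ in $\blC_{i+1}$, squeezing the Kre\u{\i}n--Nudelman terminal block $(\varphi+\phi\sqrt{s})^{-1}$ between the Gau{\ss} ($\blC_{m+1}=0$) and Gau\ss-Radau ($\blC_{m+1}=\infty$) terminations. You instead work directly with the block-tridiagonal matrix, writing $\hat T_m^{\phi,\varphi}(s)+sI=(T_m+sI)-E_mD(s)E_m^T$ and exploiting (i) the Loewner ordering $0\preceq D(s)\preceq D_{\mathrm{GR}}$ for $s>0$ together with antitonicity of inversion and order-preservation of the $E_1$-compression for statement 2, and (ii) the sign of $\mathrm{Im}\,D(s)$ on $\CC^+$ plus the Nevanlinna--Herglotz characterization of matrix Stieltjes functions for statement 1. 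Each approach buys something: the paper's continued-fraction argument is self-contained and ties directly into the string interpretation that motivates the whole construction, but its M\"obius-transform step for statement 1 is only justified for $p=1$ (the paper's own remark concedes that the matrix-valued extension is left for future work); your Loewner-order and imaginary-part bookkeeping works for general block size $p$ with no such gap, at the price of invoking the integral-representation theorem for matrix Stieltjes functions as an external ingredient --- which, as you note yourself, is the one step that needs a precise citation in its matrix form. The only small point to make explicit is the positive semidefiniteness of $\tilde T_m$ (so that $\tilde T_m+sI$ is invertible for $s>0$), which you assert via \eqref{bound:GR}; this is consistent with the Gau\ss-Radau construction prescribing a node at the origin.
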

 The proof is given in Appendix~\ref{ap:ext}.
 Proposition~\ref{prop:main} states the exact solution, and the Kreĭn-Nudelman approximation satisfies the same two-sided bounds, which indicates potential accuracy improvement in the latter. In the following section, we consider the optimal choice of its parameters and quantify accuracy improvement. 




\section{Spectral optimization in Hermite-Pad\'e framework}\label{sec:Hermite-Pade}
{\subsection{Reflection optimization}

Now, it remains to choose the s.p.d. matrix parameters $\phi$ and $\varphi$ to minimize the reflection due to the truncation of the Lanczos recursion. 
Here, we propose two approaches.
\subsubsection{Matchting Gau{\ss} and Gau\ss -Radau estimates} The Gau{\ss} quadrature $\blF_m$ can be equivalently represented as 
$\blF_m(s)=u|_{x=0}$ where $u$ satisfies oddly extended problem from interval $[0,x_m]$ to interval $[0,2x_m]$ 
for equation 
\be\label{eq:Kreinext} -u_{xx}+\hat{m}_m(x)=0, \ee
\be\label{eq:bcG} u_x(0)=-1, \qquad u_x(2x_m)=1\ee
with $\hat{m}_m(x)=\mathbf{m}_m(x)$ for $x\in[0,x_m]$ and $\hat{m}_m(x)=\mathbf{m}_m(2x_m-x)$ for $x\in[x_m,2x_m]$.
Likewise the Gau\ss -Radau quadrature will be {\it approximately} defined as the even extension $\tilde \blF_m(s)\approx u|_{x=0}$ for the problem \eqref{eq:bcG} with condition
\be\label{eq:bcGR} u_x(0)=-1, \qquad u_x(2x_m)=-1.\ee The exact condition would be extension on $[0,2x_{m+1}]$ however we assume that $\gamma_m/x_{m}$ will be small for large $m$, so the difference will be insufficient. It was shown in \cite{zimmerling2025monotonicity} that average of the Gau{\ss} and Gau\ss -Radau quadratures
$\frac{\tilde \blF_m(s)+ \blF_m(s)}{2}$ is a more accurate approximation of $\blF_m(s)$ compared to $\tilde \blF_m(s)$ and $ \blF_m(s)$ taken separately. By superposition,
we obtain $\frac{\tilde \blF_m(s)+ \blF_m(s)}{2}=u|_{x=0}$, where $u$ satisfies equation \eqref{eq:Kreinext} with boundary conditions \be\label{eq:bcGRG} u_x(0)=-1, \qquad u_x(2x_m)=0.\ee Thus, if we use this wave analogy, the averaging removes the first reflection resulting from the truncation of the Lanczos recursion. So effectively, we have the twice as long recursion, which extension indeed not necessary coincides with the network produced with the true Lanczos recursion of length $2m$. Even if they coincide, this recursion has reflections from the truncation after $2m$ steps. For $s\to \infty$, the latter reflection will be small compared to the dominant part of the error of the Gau\ss ian quadrature, which is the reflection resulted from truncation after $m$ steps. That gives a condition for optimal $\phi$ and $\varphi$, i.e., it should satisfy
\be\label{eq:match}\lim_{s\to\infty}\frac{0.5[\tilde \blF_m(s)+ \blF_m(s)]-\blF_m(s)}{ \hat\blF_m^{\phi,\varphi}(s)- \blF_m(s)}=\frac{0.5[\tilde \blF_m(s)-\blF_m(s)]}{ \hat\blF^{\phi,\varphi}_m(s)- \blF_m(s)}=1.\ee
In practice, this means that we minimize the mismatch between $\hat\blF^{\phi,\varphi}_m(s)$ and $0.5[\tilde \blF_m(s)+ \blF_m(s)]$ for large values of $s$, where the Gau\ss -Radau error bound $||\tilde \blF_m(s)- \blF_m(s) ||(||\blF_m(s)||)^{-1} $ is small.

\subsubsection{Targeted smoothing of spectral measure}
A disadvantage of the above matching approach is that, by construction, it is similar to averaged Gau{\ss} and Gau\ss -Radau quadrature at the matching area. Thus, in our experiments using this matching condition, the advantage of $\hat\blF_{m}^{\phi,\varphi}$ over averages Gau\ss -Radau diminishes during convergence since $\phi,\varphi$ are obtain by matching them for shifts $s$ where convergence occurred. Here, we propose a qualitative approach by maximizing the smoothness of the spectral measure in the regions of its poor convergence.

It is known that both $\blF_m(s)$ and $\tilde \blF_{(m+1)}(s)$ are Pad\'e approximations of $\blF(s)$ at $s=\infty$ \cite{zimmerling2025monotonicity}. Here, however, we consider a class of problems, when $\blF(s)$ is an approximation of a transfer function with continuous spectrum, i.e., one with branch cut(s) along $\RR_-$. 
It is known that Hermite-Pad\'e are superior to simple Pad\'e for problems where the branch cut is explicitly introduced to the approximation \cite{Aptekarev,Suetin}. For instance, the square root function, as in our case where we approximate a transfer function with continuous spectrum by Hermite-Pad\'e function also with continuous spectrum.  
In addition to the same moment matching as in $\tilde \blF_{(m+1)}(s)$ we require that the poorly convergent poles of $\hat \blF_{m}^{\phi,\varphi }(s) $ (also known as the scattering poles or resonances in wave problems) to be located as far as possible from the physical Riemann sheet. We will achieve this by minimizing the $L_2$ norm of  $\hat \blF_{m}^{\phi,\varphi }(s)  $ along the negative real semiaxis, which is the supposed branch-cut of the underlying problem.

We should keep in mind, however, that the branch-cut assumption may not hold, and even if it does,  $A$ is not an operator with a continuous spectrum but still a finite-dimensional (even possibly very large-scale) approximation. As such, it may still have well-separated eigenvalues, corresponding to the part of the spectrum (normally, upper for the PDE problems) where such discretization loses accuracy.  Such eigenvalues, however,  are well-approximated by the Gau\ss ian quadratures. This reasoning allows us to introduce a spectral weight, given by an error estimate for the nodes of Gau\ss ian quadrature.  For that, we will use a well-known formula for  the residual of the Krylov subspace projection that can be obtained by multiplication \eqref{eq:LancRel} by $(T_m+sI)^{-1}E_1$ from the right as 
\[{( A+sI)}{\boldsymbol  Q}_m(T_m+sI)^{-1}E_1-B=
 Q_{m+1} \Beta_{m+1}E_m^T(T_m+sI)^{-1}E_1,\]
 from which we obtain an easily computed weight function as the norm of the residual relative to the norm of the Krylov  solution
 \be\label{eq:weight} \omega(s)=\frac{\|{( A+sI)}{\boldsymbol  Q}_m(T_m+sI)^{-1}E_1-B\|}{\|{\boldsymbol  Q}_m(T_m+sI)^{-1}E_1\|}=\frac{\|\Beta_{m+1}E_m^T(T_m+sI)^{-1}E_1\|}{\|(T_m+sI)^{-1}E_1\| }.
 \ee

Thus, we compute $\phi$, $\varphi$ via minimization 
\begin{align} \label{cond:opta} 
\phi,\varphi &= \argmin_{\varphi\ge0,\phi>0 }\Omega_{\phi,\varphi, \epsilon},  \\
\Omega_{\phi,\varphi,\epsilon}&:=\int_{R_-+i\epsilon}  \omega^2(s) \|
 {E_{1}}^T(\hat T_m^{\phi,\varphi}(s)+sI)^{-1}E_1 \|_F^2 ds \nonumber
 \end{align} for some small $\epsilon>0$, $\|.\|_F$ is the Frobenius norm.

\subsection{Spectral bound}
The poles of $\blF_m,\tilde\blF_{m+1}$ lie on the negative semiaxis, so both the optimization algorithms should deliver an approximation different from the Gau{\ss} and Gau\ss -Radau quadratures in this case.
Here, we quantify the acceleration brought by the introduction of the Hermite-Pad\'e term in our approximant. 
For simplicity, we consider here the SISO case with $p=1$, and analyze this problem in a more general setting than \eqref{eq:prob1}, i.e., replace the resolvent with a rather general matrix function $f$. 

Let $\rho(x)$ and $\hat \rho_m(x)$ be spectral densities of respectively $\blF(s)$ and $\blF_m(s)$, $x\in\RR_-$, i.e., 
\[\rho(-x)=\frac{1}{\pi}\lim_{\epsilon \to 0}\Im \blF(x+i\epsilon ), \quad \hat \rho_m(-x)=\frac{1}{\pi}\lim_{\epsilon \to 0}\Im  \hat\blF_m^{\phi,\varphi}(x+i\epsilon ) .\]
 Due using spectral theorem, any  function $f(x)$ continuous on $\RR_+$ defines 
\be\label{eq:spec} B^T f(A)B=\int_0^\infty f(x)\rho(x)dx, \quad E_1^Tf(  \hat T^{\phi,\varphi}_{m+1})E_1=\int_0^\infty f(x)\hat \rho_m(x)dx.\ee

Here, we extend the bounds of \cite{druskin1989two,Knizhnerman1996TheSL} from Lanczos decomposition and Gau\ss ian quadrature to the Kreĭn-Nudelman extension. 
\begin{lemma}\label{lem:1}
Let $ \sum_{i=1}^ {2m} \bbF_i s^{-i} $ and $ \sum_{i=1}^ {2m} \hat \bbF_{m , i} s^{-i}$ be a formal Laurent expressions of $\blF$ and $ \blF_{m }$.
Then
{\be\label{EQ:Laurent}\bbF_i =B^T(-A)^{i-1}B=\hat \bbF_{m {, i}}= E_1^T (- T_m)^{i-1} E_1,\quad i=1,\ldots,2m.\ee}
\end{lemma}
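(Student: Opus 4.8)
The plan is to reduce the stated moment identity to a purely algebraic statement about powers of $A$ and $T_m$, and then to extract it from the block-Lanczos relation \eqref{eq:LancRel} together with the orthonormality of the Lanczos basis. First I would expand both resolvents in a Neumann series at $s=\infty$: since $(A+sI)^{-1}=\sum_{j\ge1}(-A)^{j-1}s^{-j}$ and identically $(T_m+sI)^{-1}=\sum_{j\ge1}(-T_m)^{j-1}s^{-j}$, comparing coefficients in $\blF(s)=B^T(A+sI)^{-1}B$ and $\blF_m(s)=E_1^T(T_m+sI)^{-1}E_1$ reads off $\bbF_i=B^T(-A)^{i-1}B$ and $\hat\bbF_{m,i}=E_1^T(-T_m)^{i-1}E_1$. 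Hence the lemma is equivalent to the family of block identities
$$ B^T A^k B = E_1^T T_m^k E_1, \qquad k=0,1,\ldots,2m-1. $$

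The central ingredient is a ``half'' identity. Writing $B=Q_1={\boldsymbol Q}_m E_1$, I would prove by induction on $k$ that
$$ A^k {\boldsymbol Q}_m E_1 = {\boldsymbol Q}_m T_m^k E_1, \qquad k=0,\ldots,m-1. $$
The inductive step applies $A$ and substitutes \eqref{eq:LancRel}, producing the remainder $Q_{m+1}\Beta_{m+1}\bigl(E_m^T T_m^{k}E_1\bigr)$. Because $T_m$ in \eqref{eq:T} is block tridiagonal, $T_m^{k}$ is block-banded of half-bandwidth $k$, so its $(m,1)$ block $E_m^T T_m^{k}E_1$ vanishes whenever $k\le m-2$; this kills the remainder and closes the induction.

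With the half identity available, I would split the exponent and use the symmetry of $A$ and $T_m$ together with ${\boldsymbol Q}_m^T{\boldsymbol Q}_m=I$. For any $k=a+b$ with $a,b\le m-1$,
$$ B^T A^k B = (A^a{\boldsymbol Q}_m E_1)^T(A^b{\boldsymbol Q}_m E_1) = E_1^T T_m^{a}\,{\boldsymbol Q}_m^T{\boldsymbol Q}_m\, T_m^{b}E_1 = E_1^T T_m^k E_1. $$
Since every $k\le 2m-2$ admits such a splitting, this already settles the moments $i=1,\ldots,2m-1$.

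The one case requiring extra care, and the reason Gaussian quadrature matches a full $2m$ moments rather than $2m-1$, is $k=2m-1$. Here I would take $a=m-1$, $b=m$ and note that $A^m{\boldsymbol Q}_m E_1={\boldsymbol Q}_m T_m^{m}E_1+Q_{m+1}\Beta_{m+1}\bigl(E_m^T T_m^{m-1}E_1\bigr)$ now carries a genuinely nonzero remainder, since the $(m,1)$ block of $T_m^{m-1}$ need not vanish. When paired with $A^{m-1}B={\boldsymbol Q}_m T_m^{m-1}E_1$, however, this remainder is annihilated by the orthogonality ${\boldsymbol Q}_m^T Q_{m+1}=0$ of the next Lanczos block to the whole basis, leaving exactly $E_1^T T_m^{2m-1}E_1$. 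I expect this last case to be the main obstacle: it is the only place where the residual term is not killed by the bandwidth argument, and one must instead invoke the orthogonality of $Q_{m+1}$ to the Krylov subspace (equivalently, the Galerkin projection property), which is precisely the mechanism granting Gaussian quadrature its extra order of accuracy.
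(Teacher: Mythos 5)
Your proof is correct, and it is essentially self-contained where the paper's is not. The paper's proof expands both resolvents at $s=\infty$, reduces powers of the Kreĭn-Nudelman matrix $\hat T^{\phi,\varphi}_{m+1}(s)$ applied to $E_1$ to powers of $T_m$ via the block-bandedness of tridiagonal matrices, and then delegates the actual moment matching to Lemma~2 of Knizhnerman (1996); it is really aimed at the stronger conclusion $\bbF_i=\hat\bbF^{\phi,\varphi}_{m+1,i}$ used later in Proposition~\ref{prop:2}, and it contains an off-by-one slip (it writes $B^T(-A)^iB$ where $(-A)^{i-1}$ is meant — you get the exponent right). You instead prove the statement as literally written by the classical Lanczos exactness argument: the half identity $A^kB={\boldsymbol Q}_mT_m^kE_1$ for $k\le m-1$, with the same bandwidth observation killing the residual $Q_{m+1}\Beta_{m+1}E_m^TT_m^kE_1$ for $k\le m-2$; the splitting $k=a+b$ with $a,b\le m-1$ together with ${\boldsymbol Q}_m^T{\boldsymbol Q}_m=I$ for $k\le 2m-2$; and the orthogonality ${\boldsymbol Q}_m^TQ_{m+1}=0$ to dispose of the genuinely nonzero residual at $k=2m-1$. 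This buys a complete, citation-free proof that correctly isolates the one step where orthogonality rather than bandedness carries the load. What it does not deliver is the variant for $\hat T^{\phi,\varphi}_{m+1}(s)$ that the paper's proof is implicitly after; your argument extends to that case immediately once one adds the paper's observation that $E_1^T\bigl(\hat T^{\phi,\varphi}_{m+1}(s)\bigr)^kE_1=E_1^TT_m^kE_1$ for $k\le 2m-1$, since a length-$k$ walk from block $1$ back to block $1$ in an $(m+1)$-block tridiagonal matrix cannot reach the modified last diagonal block unless $k\ge 2m+1$.
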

\begin{proof}
Expanding $s^{-1}(\frac{A}{s}+I)^{-1}$ with respect to the powers of $\frac{A}{s}$,
we obtain $\bbF_i = B^T(-A)^iB$. Likewise, expanding $s^{-1}(\frac{\hat T_{m+1}^{\phi,\varphi}(s)}{s}+I)^{-1}$ with respect to powers of $\frac{\hat T_{m+1}^{\phi,\varphi}(s)}{s}$ we obtain\footnote{In abuse of notation we write $\hat T_{m+1}^{\phi,\varphi}(s) E_1$, in the understanding that $E_1$ is an $(m+1)p\times p$ canonical block-identity vector matching the dimensions of $\hat T_{m+1}$. } 
\be\nonumber
\hat \bbF^{\phi,\varphi}_{m+1  ,i}= E_1^T (\hat T_{m+1}^{\phi,\varphi}(s))^i E_1=(\hat T_{m+1}^{\phi,\varphi}(s))^{i/2}E_1)^T (\hat T_{m+1}^{\phi,\varphi}(s))^{i/2}E_1 
\ee
if $i$ is even and otherwise 
\be\nonumber
\bbF^{\phi,\varphi}_{m+1,i}= E_1^T (\hat T_{m+1}^{\phi,\varphi}(s))^i E_1=(\hat T_{m+1}^{\phi,\varphi}(s))^{(i-1)/2}E_1)^T (\hat T_{m+1}^{\phi,\varphi}(s))^{(i+1)/2)}E_1.
\ee
Due to the tridiagonal structure of  $\hat T_{m+1}^{\phi,\varphi}(s)$, the first component of $\hat T_{m+1}^{\phi,\varphi}(s)^{i}E_1$ coincides with $T_m^iE_1$ for $i\le m$, so we can replace $\hat T^{\phi,\varphi}_{m+1}(s)$ with $T_m$.

This allows us to apply Lemma~2 of \cite{Knizhnerman1996TheSL} for $\epsilon=0$ and obtain $\bbF_i =\hat \bbF^{\phi,\varphi}_{m+1 ,i}$.
\end{proof}

\begin{proposition}\label{prop:2}
Let $\|A\|\le d$, $f(x)$ be continuous on $[0,d]$.
Then
\be\label{eq:bond2} 
|B^T f(A)B-E_1^Tf(\hat T_m^{\phi,\varphi}(s))E_1|<\min_{\deg p\le 2m-1} \max_{x\in[0,d]}|f-p_{2m-1}|\int_{0}^{d}|\rho(x)-\hat\rho_m(x) | dx .\ee 
\end{proposition}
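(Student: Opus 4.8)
The plan is to adapt the Druskin--Knizhnerman error argument \cite{druskin1989two,Knizhnerman1996TheSL} for Gau{\ss}ian quadrature to the Kre\u\i n--Nudelman setting: reduce the functional error to the product of a best uniform polynomial approximation of $f$ on $[0,d]$ and an $L_1$ distance between the two spectral densities. The single structural fact that drives everything is the moment matching already established in Lemma~\ref{lem:1}, namely that the linear functionals $f\mapsto B^Tf(A)B$ and $f\mapsto E_1^Tf(\hat T_m^{\phi,\varphi})E_1$ agree on every polynomial of degree at most $2m-1$.

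Concretely, I would fix an arbitrary polynomial $p$ with $\deg p\le 2m-1$ and write
\[
B^Tf(A)B-E_1^Tf(\hat T_m^{\phi,\varphi})E_1=\big[B^Tf(A)B-B^Tp(A)B\big]-\big[E_1^Tf(\hat T_m^{\phi,\varphi})E_1-E_1^Tp(\hat T_m^{\phi,\varphi})E_1\big],
\]
where the inserted polynomial terms cancel because $B^Tp(A)B=E_1^Tp(\hat T_m^{\phi,\varphi})E_1$ by Lemma~\ref{lem:1}. Using the spectral representation \eqref{eq:spec} to rewrite each bracket as an integral of $f-p$ against the corresponding density, the two brackets combine into a single integral of $f-p$ weighted by $\rho-\hat\rho_m$, giving
\[
B^Tf(A)B-E_1^Tf(\hat T_m^{\phi,\varphi})E_1=\int_0^d\big[f(x)-p(x)\big]\big[\rho(x)-\hat\rho_m(x)\big]\,dx.
\]

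From here the estimate is routine: I would take absolute values, bound $|f-p|$ by its maximum over $[0,d]$ and pull it out of the integral, and finally minimize over all $p$ of degree at most $2m-1$, which produces exactly the right-hand side of \eqref{eq:bond2}.

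The hard part is not the inequality but justifying the moment-matching step for $\hat T_m^{\phi,\varphi}(s)$, which is genuinely more delicate than in the classical case. Unlike $T_m$, this matrix depends on $s$ through the $\sqrt{s}$ damper in \eqref{eq:hatal}, and by Proposition~\ref{prop:main} the associated functional is not a finite combination of point masses but a Stieltjes functional carrying a branch cut on $\RR_-$. I would therefore interpret $E_1^Tf(\hat T_m^{\phi,\varphi})E_1$ through the Stieltjes measure of Proposition~\ref{prop:main}, whose density is precisely $\hat\rho_m$, rather than as a literal matrix function, and then verify that its moments up to order $2m-1$ are $s$-independent and coincide with those of $A$ --- which is exactly the assertion of Lemma~\ref{lem:1}. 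A secondary point to confirm is that $\hat\rho_m$, like $\rho$, may be taken supported on the spectral interval $[0,d]$, so that $f-p$ need only be controlled where $f$ is continuous and the best-approximation factor $\min_{\deg p\le 2m-1}\max_{x\in[0,d]}|f-p|$ is finite; any branch-cut mass lying outside $[0,d]$ would have to be shown negligible (or absent) for the integral to localize to $[0,d]$ as written.
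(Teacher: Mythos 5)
Your proposal is correct and follows essentially the same route as the paper's proof: insert a polynomial of degree at most $2m-1$ using the moment matching of Lemma~\ref{lem:1}, rewrite the difference as $\int_0^d [f(x)-p(x)][\rho(x)-\hat\rho_m(x)]\,dx$ via the spectral representations \eqref{eq:spec}, and conclude with a H\"older ($L^\infty$--$L^1$) estimate followed by minimization over $p$. Your additional care in interpreting $E_1^Tf(\hat T_m^{\phi,\varphi})E_1$ through the Stieltjes measure of Proposition~\ref{prop:main} (since $\hat T_m^{\phi,\varphi}$ is $s$-dependent and its spectral density is not a finite sum of point masses) and in localizing $\hat\rho_m$ to $[0,d]$ addresses genuine points that the paper's proof leaves implicit.
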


\begin{proof}
Expanding $(s+x)^{-1}$ using a Laurent series with respect to $s$  
and substituting into \eqref{eq:spec} we obtain the Stieljes moments
\[\bbF_i=\int_0^\infty x^i\rho(x)dx, \quad \hat \bbF_{m , i}=\int_0^\infty x^i\hat \rho_m(x)dx.\]
Then, combining the above identities with Lemma\ref{lem:1} we obtain
\begin{eqnarray*}
 \label{eq:ident}B^Tf(A)B-E_1^Tf(\hat T^{\phi,\varphi}_m)E_1=\\ B^Tf_m(A)B-B^Tp_{2m-1}(A)B -[E_1^Tf(T_m)E_1-E_1^Tp_{2m-1}(T_m)E_1]=\\ \int_0^d [f(x)-p_{2m-1}(x)][\rho(x)-\hat \rho_m(x)] .\end{eqnarray*}
 Applying H\"older inequality to r.h.s. of the last identity, we obtain 
 \eqref{eq:bond2}.
\end{proof}

For $\beta_{m+1}=0$ $\rho=\hat\rho_m$ and the solution will be exact. For $\beta_{m+1}\ne 0$ this proposition will remain valid for $a=b=0$, i.e., for the Gau{\ss} quadrature, in which case $\int_{0}^{d}|\rho(x)-\hat\rho_m(x) | dx=2$, and the bound becomes similar to \cite{druskin1989two}[Theorem~2], only with $m-1$ instead $2m-1$ due to the doubling of the polynomial moments in the quadratures. 

Let us now assume a continuous spectral measure $\rho$. By choosing a spd bounded $\phi,\varphi$ satisfying \eqref{eq:match} producing maximal ``absorption" it moves poorly converged poles of the spectral measure from the branch cut to the second Riemann sheet, thus $\hat \rho_m(x)$ becomes analytic in a neighborhood $[0,d]$ on both the Riemann sheets. If $\rho(x)$ is similarly analytic, e.g., as in scattering problems in the unbounded domains \cite{Zworski},
then as the Hermite-Pad\'e approximant $\hat \rho_m(x)$ also converges to $\rho(x)$ \cite{Aptekarev,Suetin}. For continuous $\rho(x)$ the Chebyshev bound $\min_{\deg p\le 2m-1} \max_{x\in[0,d]}|f-p_{2m-1}|$ gives the best asymptotic convergence rate, so the factor \[\int_{0}^{d}|\rho(x)-\hat\rho_m(x) | dx\] gives convergence acceleration due to implicitly using the Hermite-Pad\'e approximation in the Kreĭn-Nudelman approach. In our optimization, we target the estimate of $\int_{0}^{d}|\rho(x)-\hat\rho_m(x) | dx$ via the residual of the Stieltjes string.} This factor decays exponentially if the domains of analyticity do not collapse. In comparison, the bound for the reduction of errors in the averaged Gau{\ss} and Gau\ss -Radau quadratures gives only a constant error reduction factor \cite{zimmerling2025monotonicity}. 


\section{Numerical Examples}\label{sec:NumEx}
In this preprint, we limit ourselves to 2D diffusion and 3D electromagnetism. We find $\phi,\varphi$ via minimization of \eqref{cond:opta} using the Nelder--Mead Simplex Method~\cite{LagariasOpt}. We optimize this expression in an interval $s\in[-d,\, 0]$, rather than over $\mathbb{R}_-$, where $s=-d$ corresponds to about the first 10\% of the dense spectrum, i.e. 10\% of the interval of support of $\omega(s)$.

\subsection{2D Diffusion}

We discretize the operator  
\[
\sigma({\bf x})^{-\frac{1}{2}}\Delta\sigma({\bf x})^{-\frac{1}{2}}
\]
on an unbounded domain using second-order finite differences over a $300\times 300$ grid. In the central region of the grid, where $\sigma(\bx)$ varies, we set the grid spacing to $\delta_x=1$.  

{ This formulation is relevant to both heat transfer and the scalar diffusion of electromagnetic fields. To approximate the problem numerically, we impose a Dirichlet boundary condition on a bounded domain, employing $N_{opt}=10$ exponentially increasing grid steps in the exterior while maintaining a uniform grid in the interior. By using the optimal geometric factor $\exp{(\pi/\sqrt{10})}$, as suggested in \cite{idkGrids}, we achieve spectral accuracy in the exterior region, effectively approximating the boundary condition at infinity through exponential decay.}  

The resulting grid and function $\sigma(\bx)$ are illustrated in Figure~\ref{fig:Heat2config}. The figure only depicts the first two steps of the exponentially increasing grid, as subsequent steps grow rapidly. Here, $B$ is represented as a vector of discrete delta functions with nonzero entries at the grid nodes corresponding to the transducer locations, labeled as {\it Trx} in the figure. In the preliminary SISO results presented here we only consider the first transducer.

Figure~\ref{fig:HeatReal} presents the convergence results for $s=2.31\cdot 10^{-4}$, while Figure~\ref{fig:HeatImag} shows the convergence behavior for purely imaginary shifts $s=\imath2.31\cdot 10^{-4}$. In the latter case, block Lanczos exhibits slower convergence. The Kreĭn-Nudelman formulation converges faster than the Gau{\ss} and Gau\ss -Radau quadrature. The averaged Gau{\ss} and Gau\ss -Radau converge smoothly than the Kreĭn-Nudelman formulation, and future work will make the optimization routine used to find $\varphi$ and $\phi$ more robust since the oscillations observed in the convergence curve originate from a lack of robustness of the current optimization framework. That is, smoothing the recovered $\phi$ and $\varphi$ values over the iterations smoothens the convergence. Nonetheless, when fixing the iterations to say $m=400$ and examining the error for a range of real shifts, as shown in Figure~\ref{fig:HeatSweepReal}, and imaginary shifts, as shown in Figure~\ref{fig:HeatSweepImag} we see that the Kreĭn-Nudelman formulation consistently outperforms all other considered approaches.

\begin{figure}[h!]
	\centering
	\includegraphics[width = 0.65 \linewidth]{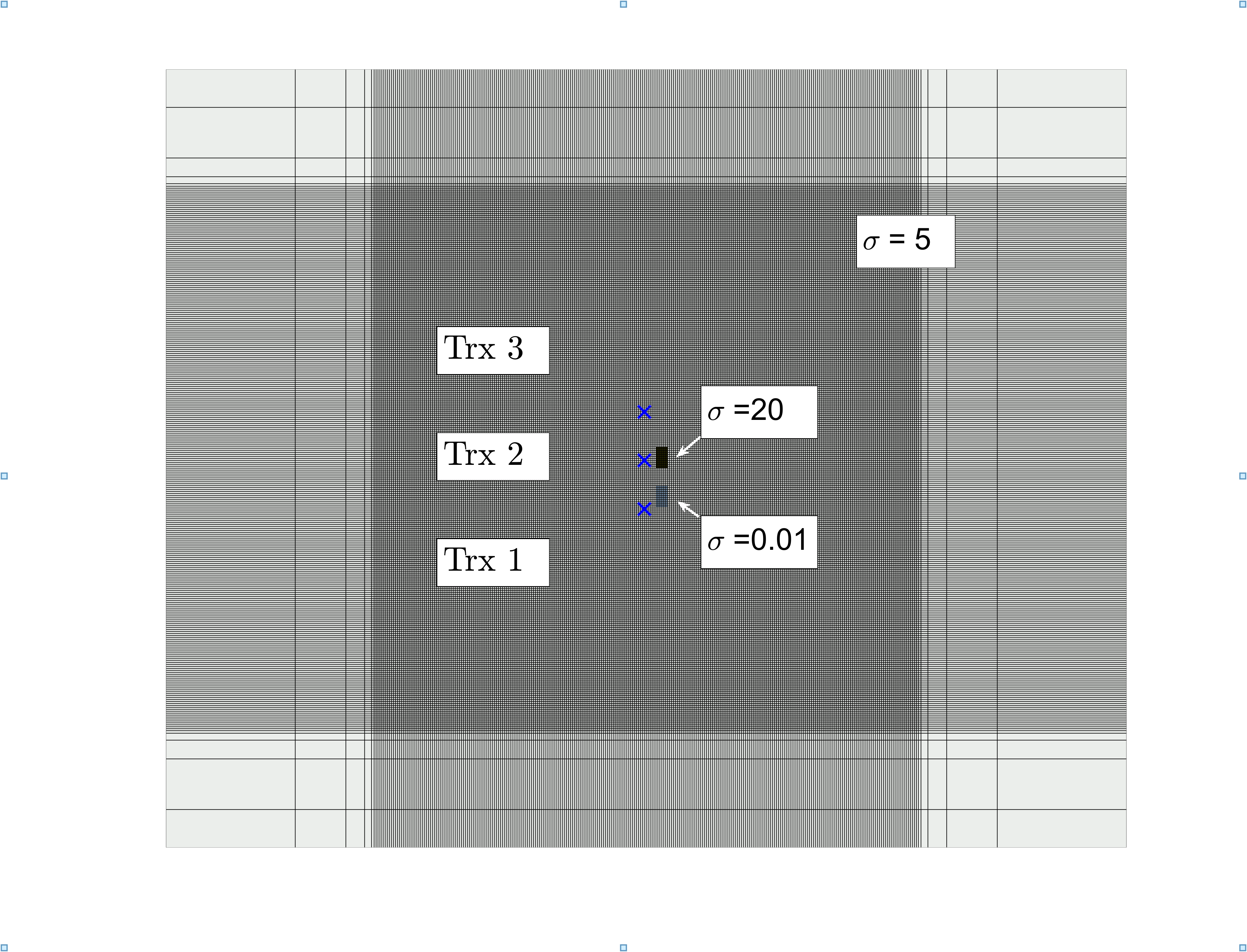}
	\caption{Grid, heat conductivity $\sigma(\bx)$ and transducer locations of the heat diffusion testcase.}\label{fig:Heat2config}
\end{figure}

\begin{figure}[h!]
 \centering
 
 \begin{subfigure}[b]{.47\linewidth}
 \centering
 \includegraphics[width = \linewidth]{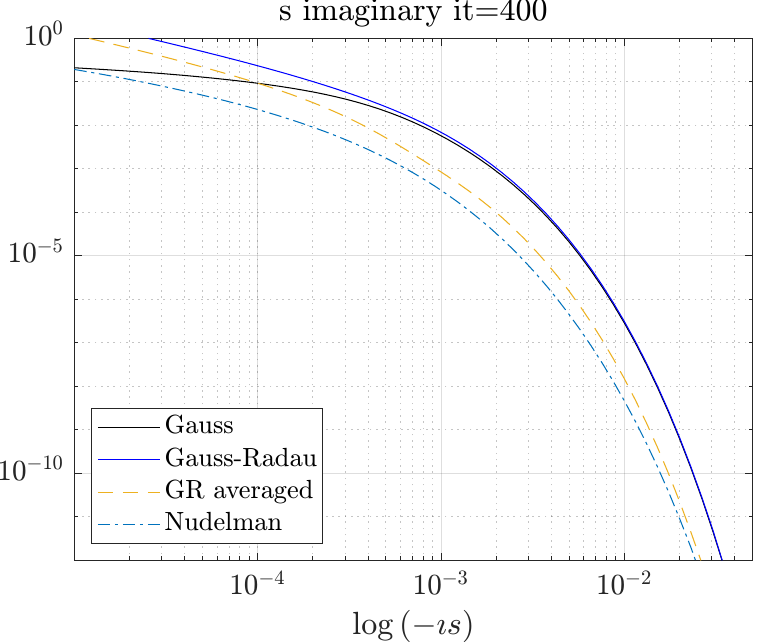}
 \caption{Error norms for various imaginary values of $s$ after $m=400$ iterations}
 \label{fig:HeatSweepImag}
 \end{subfigure}%
 \hspace{1em}
 \begin{subfigure}[b]{.47\linewidth}
 \centering
\includegraphics[width = \linewidth]{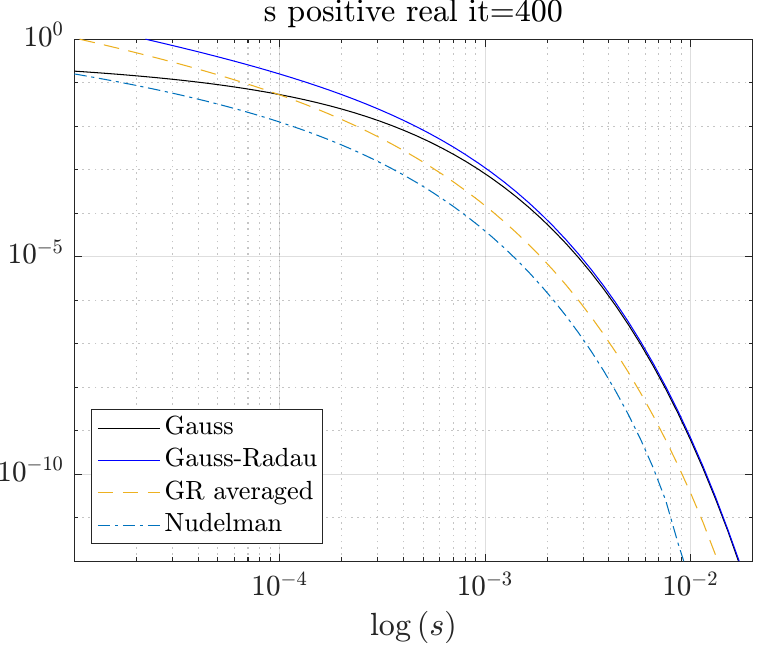}
 \caption{Error norms for various real values of $s$ after $m=400$ iterations.}
 \label{fig:HeatSweepReal}
 \end{subfigure}

 \begin{subfigure}[b]{.47\linewidth}
 \centering
 \includegraphics[width = 1\linewidth]{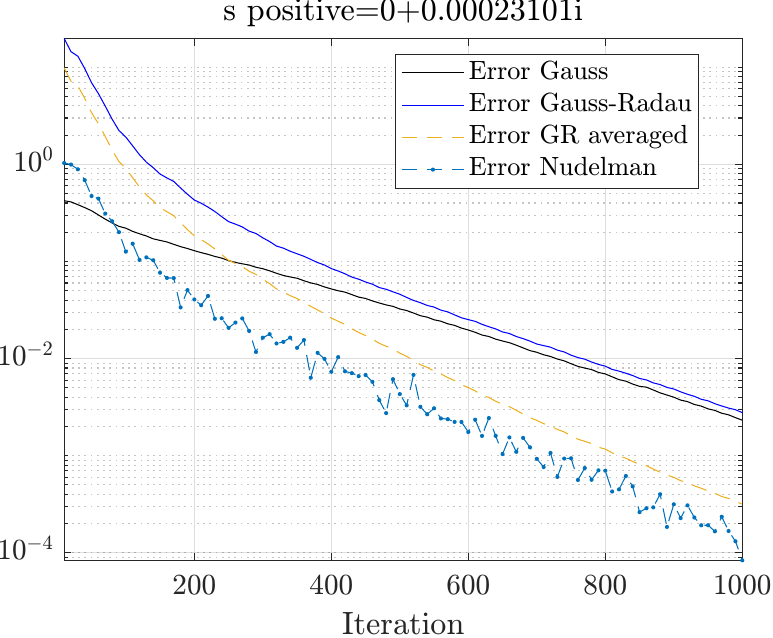}
 \caption{Convergence for an imaginary $s$ value.}
 \label{fig:HeatImag}
 \end{subfigure}%
 \hspace{1em}
 \begin{subfigure}[b]{.47\linewidth}
 \centering
\includegraphics[width = \linewidth]{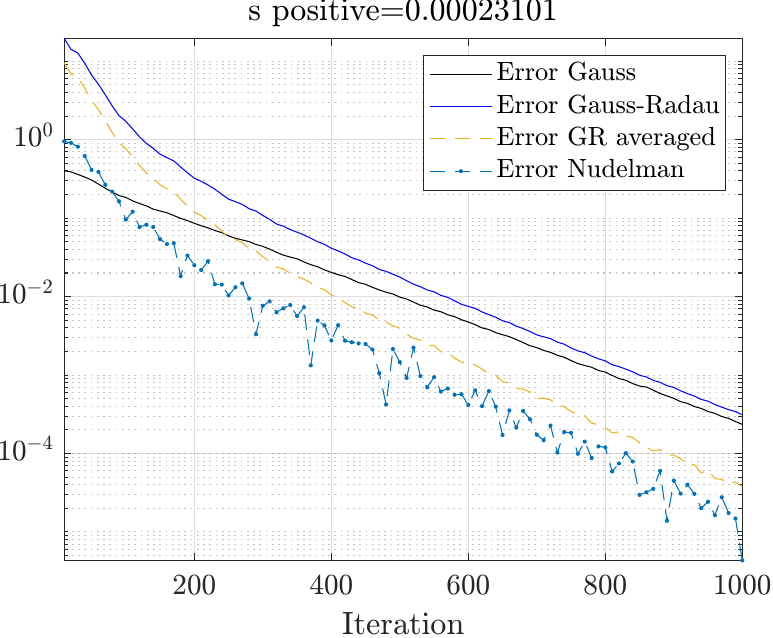}
 \caption{Convergence for a real $s$ value.}
 \label{fig:HeatReal}
 \end{subfigure}
\caption{SISO results for the heat equation.}\label{fig:HeatResults}
\end{figure}

\subsubsection{3D Electromagnetic diffusion}\label{sec:EMdiff} In this example, we consider the solution of the 3D Maxwell equations in the quasi-stationary (diffusive) approximation. Krylov subspace methods for approximating the action of matrix functions in this context were first introduced in \cite{druskin1988spectral} and remain a widely used approach in geophysical applications. The diffusive Maxwell equations in $\mathbb{R}^{3}$ can be rewritten as  
\be\label{eq:max}
(\nabla \times \nabla \times +\sigma(\bx) \mu_0 s)\mathcal{E}_{(r)}(\bx,s) = - { s}\mathcal{J}^{\rm ext}_{(r)}(\bx,s), \quad \bx\in \Omega,
\ee  
where $\sigma$ represents the electrical conductivity, $\mu_0$ is the vacuum permeability, and $\mathcal{J}^{\rm ext}_{(r)}$ is the external current density associated with the transmitter index $(r)$, generating the electric field $\mathcal{E}_{(r)}(\bx,s)$. For $s\notin\mathbb{R}_-$, the solution of \eqref{eq:max} vanishes at infinity,  
\be\label{eq:inftymax}  
\lim_{\bx\to\infty} \|\mathcal{E}(\bx)\|=0.  
\ee  

To approximate \eqref{eq:max}-\eqref{eq:inftymax}, we employ the conservative Yee grid, following the methodology outlined in \cite{druskin1988spectral}. Similar to the approach used for the diffusion problem in $\mathbb{R}^2$, we truncate the computational domain by imposing Dirichlet boundary conditions on the tangential components of $\mathcal{E}_{(r)}$ at the boundary. The grid is constructed with an optimally spaced geometric progression, ensuring spectral convergence of { condition~\eqref{eq:inftymax} at infinity}. In our setup, we use $N_{opt}=6$ geometrically increasing grid steps with a scaling factor of $\exp{(\pi/\sqrt{6})}$, as proposed in \cite{idkGrids}. The resulting grid consists of $N_x=80$, $N_y=100$, and $N_z=120$ points, yielding a matrix $A$ of size $N=2\,821\,100$.  

The conductivity $\sigma$ is homogeneous and set to $1$ throughout the domain, except for two inclusions where $\sigma=0.01$, as depicted in Figure~\ref{fig:4_config}. The excitation consists of six magnetic dipoles (approximated by electric loops): three positioned above the inclusions and three located at their center, as indicated by the arrows in Figure~\ref{fig:4_config}. This setup is analogous to tri-axial borehole tools used in electromagnetic exploration geophysics \cite{saputra2024adaptive}, leading to a matrix $B$ with six columns.  For the SISO (p=1) testcase, the upper $x$ oriented dipole is used.

Similar to the results for the heat operator, the 3D results show an advantage of the Kreĭn-Nudelman formulation over Gau\ss , Gau\ss -Radau, and averaged Gau\ss -Radau quadrature rules. In Figure~\ref{fig:EMReal}, the convergence for $s=0.061359$ is shown alongside the convergence for a purely imaginary  $s=\imath 0.0152$ in Figure~\ref{fig:EMImag}. Averaged Gau\ss -Radau and Kreĭn Nudelman develop an advantage in the area where ordinary Gau{\ss} quadrature converges linearly, in line with our theoretical results. After $m=600$ iterations, we see that this advantage is present across a range of imaginary shifts $\Im(s)>10{-2}$,  where we have entered linear convergence as shown in \ref{fig:EMSweep}. In conclusion, the averaged Gau\ss -Radau rule provides a lower error than the standard Gau\ss ian quadrature once the Gau\ss ian quadrature converges linearly. The new Kreĭn-Nudelman formulation enables lower approximation error across iterations and shifts for such cases. The convergence behavior for 3D PDEs is more regular in our experience than in the 2D case, and $\phi$ and $\varphi$ vary smoothly across iterations.




\begin{figure}[h!]
 \centering
 
 \begin{subfigure}[b]{.47\linewidth}
 \centering
 \includegraphics[width = 0.75\linewidth]{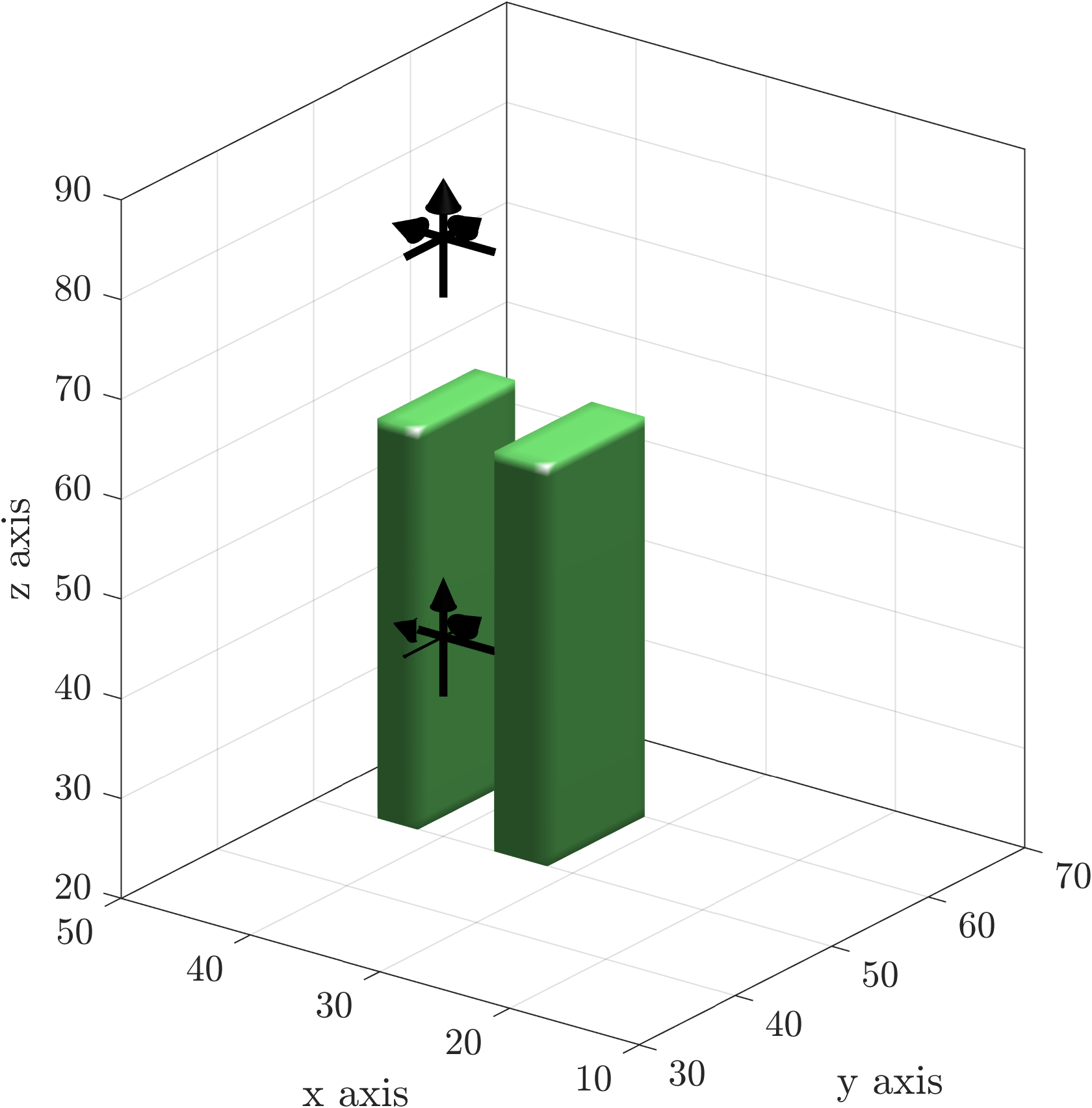}
 \caption{3D rendering of the simulated configuration with two inclusions.  For the SISO testcase, the upper $x$ oriented dipole is used.}
 \label{fig:4_config}
 \end{subfigure}%
 \hspace{1em}
 \begin{subfigure}[b]{.47\linewidth}
 \centering
\includegraphics[width = \linewidth]{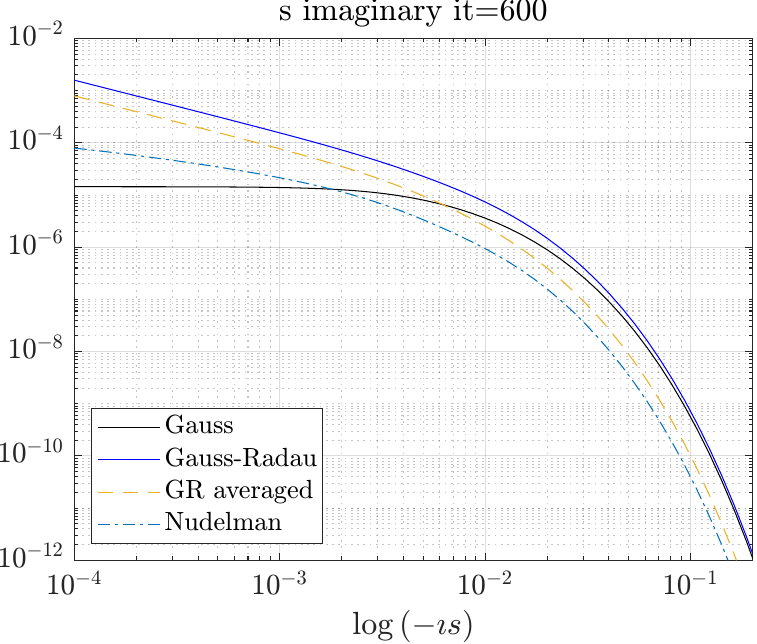}
 \caption{Error norms for various values of $s$ after $m=600$ iterations.}
 \label{fig:EMSweep}
 \end{subfigure}

 \begin{subfigure}[b]{.47\linewidth}
 \centering
 \includegraphics[width = 1\linewidth]{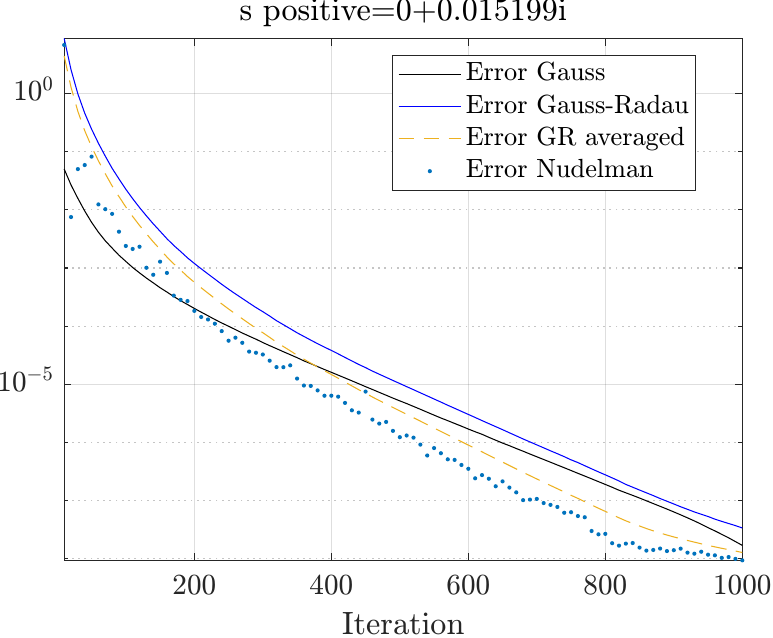}
 \caption{Convergence for an imaginary $s$ value.}
 \label{fig:EMImag}
 \end{subfigure}%
 \hspace{1em}
 \begin{subfigure}[b]{.47\linewidth}
 \centering
\includegraphics[width = \linewidth]{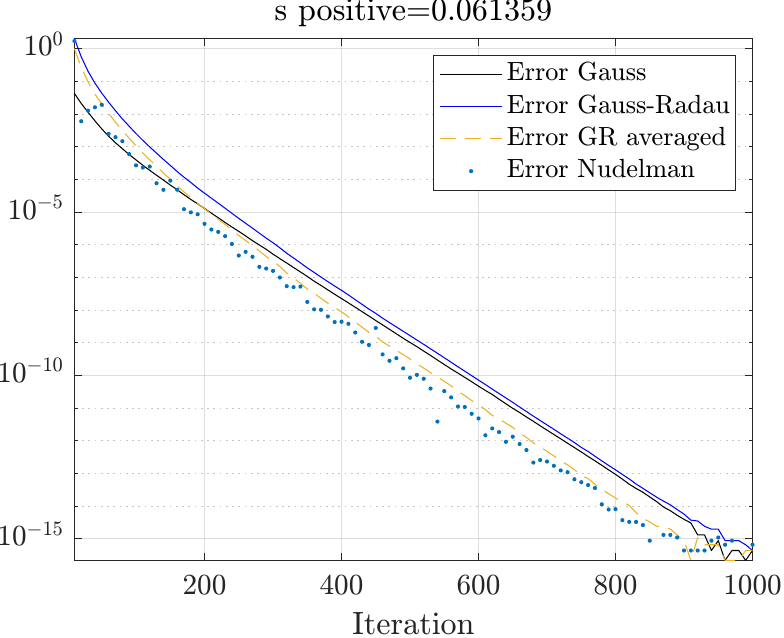}
 \caption{Convergence for a real $s$ value.}
 \label{fig:EMReal}
 \end{subfigure}
\caption{Converge result for the 3D electromagnetic case in the SISO setting ($p=1$).}
\label{fig:EMSweeps}
\end{figure}

\section{Conclusion}
We have shown that Kreĭn’s network representations of rational Stieltjes functions offer a powerful framework for interpreting the effect of truncating the Lanczos recursion. In particular, the truncation can be understood as inducing spurious reflections at the boundary of the computational domain, analogous to reflections in wave propagation problems.

We propose the introduction of absorbing boundary conditions inspired by the Kreĭn–Nudelman theory. These boundary modifications effectively suppress ``reflections" caused by the truncation of the Lanczos recursion. This improves  Lanczos-based approximations for matricees with dense spectra originating from discretization of a PDE operator on an unbounded domain. In particular, we demonstrate that changing the last diagonal element in the tridiagonal Lanczos matrix by a shift-dependent coefficient leads to substantial acceleration of convergence in quadrature computations. The acceleration is larger than averaging of Gau{\ss} and Gau\ss -Radau quadrature rules.

Additionally, we observe that the introduction of the square root of spectral parameter corresponds, in this context, to a Hermite–Padé approximation for functions with branch cuts. This connection aligns with recent developments in approximation theory \cite{Aptekarev,Suetin}, and highlights a promising direction for further investigation.

Finally, the extension of our approach to generalized quadratic forms of the type $B^T f(A) B$, {e.g.} exponential $\exp{(-At)}$ or Stieltjes functions $f$ is straightforward and falls naturally within the proposed framework.

Several promising directions for further work remain open. A more rigorous application and analysis of Hermite–Padé approximation in the presence of branch cuts is a natural next step, particularly for functions with branch cuts, and could deepen the theoretical foundation of the method. Enhancing adaptive parameter selection and extending the approach to rational Krylov subspaces and parametric model reduction are natural next steps. Incorporating AAA rational approximations may further improve the robustness and accuracy of the proposed framework. Demonstrating the Kreĭn-Nudelman approximations in the MIMO setting with $p>1$ is also a key objective. In addition, preliminary results in 3D suggest that closed-form expressions for $\phi,\varphi$ may be attainable, offering potential analytical simplifications.
%
\section*{Conflict of interest}
The authors declare that they have no conflict of interest.

\bibliographystyle{spmpsci} 
\bibliography{bib} 

\begin{thebibliography}{10}
\providecommand{\url}[1]{{#1}}
\providecommand{\urlprefix}{URL }
\expandafter\ifx\csname urlstyle\endcsname\relax
  \providecommand{\doi}[1]{DOI~\discretionary{}{}{}#1}\else
  \providecommand{\doi}{DOI~\discretionary{}{}{}\begingroup \urlstyle{rm}\Url}\fi

\bibitem{Aptekarev}
Aptekarev, A.I., Van~Assche, W., Yattselev, M.L.: {H}ermite-{P}ad{\'e} approximants for a pair of {C}auchy transforms with overlapping symmetric supports.
\newblock Communications on Pure and Applied Mathematics \textbf{70}(3), 444--510 (2017)

\bibitem{Suetin}
Dobrolyubov, E.O., Ikonomov, N.R., Knizhnerman, L.A., Suetin, S.P.: Rational {H}ermite-{P}ad\'e approximants vs {P}ad\'e approximants. numerical results (2023).
\newblock \urlprefix\url{https://arxiv.org/abs/2306.07063}

\bibitem{druskin1988spectral}
Druskin, V., Knizhnerman, L.: A spectral semi-discrete method for numerical solution of {3D} non-stationary problems in electrical prospecting.
\newblock Phys. Sol. Earth \textbf{24}, 641--648 (1988)

\bibitem{druskin1999Gaussian}
Druskin, V., Knizhnerman, L.: {Gaussian} spectral rules for the three-point second differences: {I}. a two-point positive definite problem in a semi-infinite domain.
\newblock SIAM Journal on Numerical Analysis \textbf{37}(2), 403--422 (1999)

\bibitem{Druskin2016}
Druskin, V., Mamonov, A.V., Thaler, A.E., Zaslavsky, M.: Direct, nonlinear inversion algorithm for hyperbolic problems via projection-based model reduction.
\newblock SIAM Journal on Imaging Sciences \textbf{9}(2), 684--747 (2016).
\newblock \doi{10.1137/15M1039432}

\bibitem{ZaslavskySfraction}
Druskin, V., Mamonov, A.V., Zaslavsky, M.: Multiscale {S}-fraction reduced-order models for massive wavefield simulations.
\newblock Multiscale Modeling \& Simulation \textbf{15}(1), 445--475 (2017)

\bibitem{druskin1989two}
Druskin, V.L., Knizhnerman, L.A.: Two polynomial methods of calculating functions of symmetric matrices.
\newblock USSR Computational Mathematics and Mathematical Physics \textbf{29}(6), 112--121 (1989)

\bibitem{Zworski}
Dyatlov, S., Zworski, M.: Mathematical Theory of Scattering Resonances.
\newblock Graduate Studies in Mathematics. American Mathematical Society, Providence, RI (2019)

\bibitem{lot2013}
Fenu, C., Martin, D., Reichel, L., Rodriguez, G.: Block {Gauss} and anti-{Gauss} quadrature with application to networks.
\newblock SIAM Journal on Matrix Analysis and Applications \textbf{34}(4), 1655--1684 (2013)

\bibitem{GM10}
Golub, G.H., Meurant, G.: Matrices, Moments and Quadrature with Applications.
\newblock Princeton University Press (2010)

\bibitem{GOLUB1977}
Golub, G.H., Underwood, R.: The block {Lanczos} method for computing eigenvalues.
\newblock In: J.R. Rice (ed.) Mathematical Software, pp. 361--377. Academic Press (1977)

\bibitem{idkGrids}
Ingerman, D., Druskin, V., Knizhnerman, L.: Optimal finite difference grids and rational approximations of the square root i. {Elliptic} problems.
\newblock Communications on Pure and Applied Mathematics \textbf{53}(8), 1039--1066 (2000)

\bibitem{Knizhnerman1996TheSL}
Knizhnerman, L.A.: The simple {Lanczos} procedure: estimates of the error of the {G}auss quadrature formula and their applications.
\newblock Computational Mathematics and Mathematical Physics \textbf{36}, 1481--1492 (1996)

\bibitem{Krein1947}
Krein, M.G.: The theory of extensions of semi-bounded hermitian operators and its applications.
\newblock Matematicheskii Sbornik \textbf{20}, 431--495 (1947).
\newblock Part I; Part II in vol. 21 (1947), pp. 365–404

\bibitem{Krein1952}
Krein, M.G.: On a generalization of stieltjes' investigations.
\newblock Doklady Akademii Nauk SSSR \textbf{86}(6), 881--884 (1952)

\bibitem{Krein1967}
Krein, M.G.: The description of solutions of the truncated moment problem.
\newblock Matematicheskie Issledovaniya \textbf{2}(2), 114--132 (1967)

\bibitem{KreinNudelman1973}
Krein, M.G., Nudelman, A.A.: The Markov Moment Problem and Extremal Problems.
\newblock Nauka, Moscow (1973).
\newblock English translation: Translation of Mathematical Monographs, AMS, vol. 50, 1977

\bibitem{KreinNudelman1989}
Krein, M.G., Nudelman, A.A.: Some spectral properties of a nonhomogeneous string with a dissipative boundary condition.
\newblock Journal of Operator Theory \textbf{22}(2), 369--395 (1989)

\bibitem{LagariasOpt}
Lagarias, J.C., Reeds, J.A., Wright, M.H., Wright, P.E.: Convergence properties of the nelder--mead simplex method in low dimensions.
\newblock SIAM Journal on Optimization \textbf{9}(1), 112--147 (1998).
\newblock \doi{10.1137/S1052623496303470}

\bibitem{lot2008}
{López Lagomasino}, G., Reichel, L., Wunderlich, L.: Matrices, moments, and rational quadrature.
\newblock Linear Algebra and its Applications \textbf{429}(10), 2540--2554 (2008).
\newblock Special Issue in honor of Richard S. Varga

\bibitem{Meurant2023}
Meurant, G., Tich{\'y}, P.: The behavior of the {Gauss-Radau} upper bound of the error norm in {CG}.
\newblock Numerical Algorithms \textbf{94}(2), 847--876 (2023)

\bibitem{MeurantBook}
Meurant, G., Tichý, P.: Error Norm Estimation in the Conjugate Gradient Algorithm.
\newblock Society for Industrial and Applied Mathematics, Philadelphia, PA (2024)

\bibitem{O'Leary1980}
O'Leary, D.P.: The block conjugate gradient algorithm and related methods.
\newblock Linear Algebra and its Applications \textbf{29}, 293--322 (1980).
\newblock Special Volume Dedicated to Alson S. Householder

\bibitem{RRT16}
Reichel, L., Rodriguez, G., Tang, T.: New block quadrature rules for the approximation of matrix functions.
\newblock Linear Algebra and its Applications \textbf{502}, 299--326 (2016).
\newblock Structured Matrices: Theory and Applications

\bibitem{saputra2024adaptive}
Saputra, W., Ambia, J., Torres-Verd{\'\i}n, C., Davydycheva, S., Druskin, V., Zimmerling, J.: Adaptive multidimensional inversion for borehole ultra-deep azimuthal resistivity.
\newblock In: SPWLA Annual Logging Symposium, p. D041S013R005. SPWLA (2024)

\bibitem{Zimmerling_KreinNudel}
Zimmerling, J., Druskin, V., Guddati, M., Cherkaev, E., Remis, R.: Solving inverse scattering problems via reduced-order model embedding procedures.
\newblock Inverse Problems \textbf{40}(2), 025002 (2023)

\bibitem{zimmerling2025monotonicity}
Zimmerling, J., Druskin, V., Simoncini, V.: Monotonicity, bounds and acceleration of block gauss and gauss--radau quadrature for computing bt $\phi$ (a) b.
\newblock Journal of Scientific Computing \textbf{103}(1), 5 (2025)

\end{thebibliography}

\appendix
\section{Extraction of the Stieltjes parameters from }

From construction of $\Alpha_i$'s and $\Beta_i>0$ in Algorithm~\ref{alg:blockLanc} we obtain that $\hKappa_i$, $\gam_{i}$ are full rank, as long as no deflation occurs
in the block Lanczos recurrence, as $\Beta_i^T\Beta_i>0$ for $i=2,\ldots,m$, i.e. nonsingularity of all coefficients $\Alpha_i,\Beta_i$ ensures that 
Algorithm~\ref{alg:GammaExtraction} runs without breakdowns. 
 
 \begin{center}
\begin{minipage}{.55\linewidth}
 \begin{algorithm}[H]
 	\caption{Extraction algorithm $\gam/\hKappa$:\\ (Block $LDL^T$ Cholesky factorization of block tridiagonal $T_m$)}\label{alg:GammaExtraction}
 	\begin{algorithmic}
 	\normalsize
 		\State Given $\Alpha_i,\Beta_i$ and $\hKappa_1=I_p$
 		\State $\gam_1^{-1}=\hKappa_{1}^T \Alpha_1 \hKappa_{1}$ 		
		 \For{$i= 2,\dots, m$} 
 		\State $\hKappa_{i}^{-1}\quad	={ -} \gam_{i-1}(\hKappa_{i-1})^T\Beta_i^T \quad\, \phantom{-}(*)$
 		\State $\gam_i^{-1}\quad=\phantom{-} (\hKappa_{i})^T \Alpha_i \hKappa_{i}- \gam_{i-1}^{-1}\quad \,(\dagger)$
 		\EndFor

 	\end{algorithmic}
 	\label{alg:ExtractGam}
 \end{algorithm}
 \end{minipage}
 \end{center}
 Derivations of different variants of the Algorithm~\ref{alg:GammaExtraction} are given in \cite{zimmerling2025monotonicity,Druskin2016}.
 \section{Continued fraction interpretation}
 \subsection{Gau\ss ian quadrature}
 \begin{proposition}
 The Gau\ss ian quadrature can be written as a truncated matricial Stieltjes continued fraction (S-fraction) :
\be\label{eq:S-fraction1}
	\blF_m(s)= \cfrac{1}{s\hgam_1+ \cfrac{1}{\gam_1 + \cfrac{1}{s\hgam_2 +\cfrac{1}{ \ddots \cfrac{1}{s \hgam_m + \cfrac{1}{ \gam_m}} }}}} ,
\ee
where the basic building block of these S-fractions is given by the backward recursion
\be\label{eq:DefCFblock}
	\blC_i(s)=\cfrac{1}{s\hgam_i + \cfrac{1}{\gam_i+\blC_{i+1}(s)}}, \quad \blC_{m+1}(s) = 0,
\ee
with $\blC_i(s)\in\CC^{p\times p}$, $i=1,\ldots, m$ being s.p.d. for real positive $s$.

\end{proposition}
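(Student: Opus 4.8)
The plan is to read the S-fraction off as the result of block Gaussian elimination applied to the pencil form $(Z_m + s\widehat{\boldsymbol\Gamma}_m)U_m = E_1$ of \eqref{eq:LAform}, for which $\blF_m(s)$ is the $(1,1)$ block of $(Z_m + s\widehat{\boldsymbol\Gamma}_m)^{-1}$. First I would make the block-tridiagonal structure of $M := Z_m + s\widehat{\boldsymbol\Gamma}_m$ explicit: since $Z_m = J_m\Gamma_m^{-1}J_m^T$ with $J_m^T$ upper bidiagonal ($I$ on the diagonal, $-I$ on the superdiagonal), a direct computation gives diagonal blocks $M_{ii} = s\hgam_i + \gam_{i-1}^{-1} + \gam_i^{-1}$ (with the convention $\gam_0^{-1} := 0$, so $M_{11} = s\hgam_1 + \gam_1^{-1}$) and off-diagonal blocks $M_{i,i+1} = M_{i+1,i} = -\gam_i^{-1}$. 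Because all $\gam_i,\hgam_i$ are s.p.d., $M>0$ for $s\in\RR_+$, which guarantees that the elimination below is well defined.

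Next I would define the tail Schur complements. Set $D_m := M_{mm}$ and, eliminating nodes $m, m-1,\ldots, i+1$ in turn, $D_i := M_{ii} - \gam_i^{-1}D_{i+1}^{-1}\gam_i^{-1}$ for $i<m$; block elimination from the bottom then yields $\blF_m(s) = D_1^{-1}$. The subtlety is that $D_i$ carries the shared term $\gam_{i-1}^{-1}$ coming from the series element that links node $i-1$ to node $i$, so $D_i$ itself does not obey the clean recursion \eqref{eq:DefCFblock}. I would therefore introduce the reduced block $\blC_i(s) := (D_i - \gam_{i-1}^{-1})^{-1}$ and verify \eqref{eq:DefCFblock} for it. Substituting the definition of $D_i$ gives $\blC_i^{-1} = s\hgam_i + \gam_i^{-1} - \gam_i^{-1}D_{i+1}^{-1}\gam_i^{-1}$, and since $D_{i+1} = \gam_i^{-1} + \blC_{i+1}^{-1}$ the remaining step is the algebraic identity
\[ P^{-1} - P^{-1}(P^{-1}+Q^{-1})^{-1}P^{-1} = (P+Q)^{-1}, \]
applied with $P = \gam_i$, $Q = \blC_{i+1}$, which gives exactly $\blC_i^{-1} = s\hgam_i + (\gam_i + \blC_{i+1})^{-1}$. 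This identity is the one genuinely non-commutative computation and I would prove it by factoring $(P+Q)^{-1}$ out on the left.

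Finally I would pin down the terminal data and close the induction. Setting $\blC_{m+1}(s) := 0$ is consistent, since the recursion returns $\blC_m = (s\hgam_m + \gam_m^{-1})^{-1}$, matching $\blC_m = (D_m - \gam_{m-1}^{-1})^{-1}$ read off from $D_m = s\hgam_m + \gam_m^{-1} + \gam_{m-1}^{-1}$; and at the top $\gam_0^{-1}=0$ gives $\blC_1 = D_1^{-1} = \blF_m(s)$. Unwinding \eqref{eq:DefCFblock} from $i=m$ up to $i=1$ then produces the displayed S-fraction \eqref{eq:S-fraction1}. Positive definiteness follows by backward induction for $s\in\RR_+$: $\blC_{m+1}=0\ge 0$, and if $\blC_{i+1}\ge 0$ then $\gam_i + \blC_{i+1} > 0$ and $s\hgam_i>0$, so $\blC_i^{-1} = s\hgam_i + (\gam_i+\blC_{i+1})^{-1} > 0$, hence $\blC_i>0$.

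The main obstacle, beyond the bookkeeping of the shared $\gam_{i-1}^{-1}$ terms that separates $D_i$ from $\blC_i$, is guaranteeing invertibility of every Schur complement $D_i$ so that the recursion is well defined; this is exactly what the s.p.d.-ness of $M$ for $s\in\RR_+$ provides. Since $\blF_m(s)=\blC_1(s)$ is then an identity between rational matrix-valued functions of $s$ that holds on the open set $\RR_+$, it holds identically in $s$ wherever the inverses exist, which completes the representation.
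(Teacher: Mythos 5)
Your proof is correct, and it rests on the same backbone as the paper's argument --- backward elimination of the block-tridiagonal pencil $Z_m+s\widehat{\boldsymbol \Gamma}_m$, with $\blC_i$ playing the role of the Neumann-to-Dirichlet map of the sub-string on nodes $i,\dots,m$ --- but it is packaged genuinely differently. The paper runs an induction over a family of auxiliary boundary-value problems $\blU_i^j$ with a unit Neumann flux injected at node $j$, and reads the recursion off by substituting that boundary condition into the three-term recurrence; you instead form the tail Schur complements $D_i$ of the single pencil, peel off the shared series term via $\blC_i=(D_i-\gam_{i-1}^{-1})^{-1}$, and reduce the whole recursion to the one non-commutative identity $P^{-1}-P^{-1}(P^{-1}+Q^{-1})^{-1}P^{-1}=(P+Q)^{-1}$, which is valid and correctly applied with $P=\gam_i$, $Q=\blC_{i+1}$. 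What your route buys is that well-definedness is immediate: each $D_i$ is a Schur complement inside the s.p.d. trailing principal submatrix $M_{[i:m],[i:m]}$, hence itself s.p.d., whereas the paper defers invertibility of the maps to the induction (``as shown later during the induction''). What the paper's route buys is the string/Dirichlet-to-Neumann reading of $\blC_i^{-1}\blU_i=-\gam_{i-1}^{-1}(\blU_i-\blU_{i-1})$, which is reused verbatim in the appendix to derive the Kre\u{\i}n--Nudelman termination $\blC_{m+1}^{-1}=\phi\sqrt{s}$; your $\blC_i=(D_i-\gam_{i-1}^{-1})^{-1}$ is exactly that map, so the two definitions coincide. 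One point worth making explicit in a final write-up: the matrix identity requires $Q$ invertible and so cannot be invoked at the base step where $\blC_{m+1}=0$; you already close that gap by reading $\blC_m=(s\hgam_m+\gam_m^{-1})^{-1}$ directly off $D_m=M_{mm}$, which is consistent with the recursion's stated terminal value.
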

The proof for $p=1$ was known in Stieltjes work or even earlier, and its extension for $p\ge 1$ is given in \cite{zimmerling2025monotonicity}. Here, for completeness, we present its simplified variant.

 \begin{proof}
To prove this statement we use induction. At every step of the induction, we increase the continued fraction and solve a block tridiagonal pencil problem that increases by one block each step and coincides with the full pencil $(Z_m+s \widehat{\boldsymbol \Gamma}_m)$ at the last step. To facilitate this we define $\blUi{i}{j}$ as a family of $p \times p$ real matrices
\[
\{\blUi{i}{j} \in \mathbb{R}^{p \times p} | j=1,\dots,m \,\,, \,\, i=j,\dots,m\},
\]
where $i$ corresponds to the blocks coupled through the three-term recurrence relation defined by the pencil $(Z_m+s \widehat{\boldsymbol \Gamma}_m)$ and $j$ corresponds to the steps of the induction where at the $j$-th step we enforce the (block-Neumann) boundary condition 
\be\label{eq:BC}
-{\gam^{-1}_{j-1}{(\blUi{j}{j}-\blUi{j-1}{j})}}=I_p
\ee
This condition is enforced on the last $m-j$ blocks of the three term recurrence relation defined by the $m-j$ last block rows of $(Z_m+s \widehat{\boldsymbol \Gamma}_m)$ which can be written out as
\begin{align}
-{\gam^{-1}_{i}}{(\blUi{i+1}{j}-\blUi{i}{j})} +{\gam^{-1}_{i-1}{(\blUi{i}{j}-\blUi{i-1}{j})}} + s\hgam_i \blUi{i}{j} &=0, 	\quad i=j,\dots,m,\,\, j=1,\dots,m \label{eq:REC} \\
&{\quad} \blUi{m+1}{j}=0.
\end{align}
Note that for $j=1$ this corresponds together with \eqref{eq:BC} to the full pencil problem $(Z_m+s \widehat{\boldsymbol \Gamma}_m)[(\blUi{1}{1})^T, \dots, (\blUi{m}{1})^T ]^T=E_1$, which yields $\blF_m=\blUi{1}{1}$.\\

Further, we introduce the notation
\be
\blC_j = \blUi{j}{j}
\ee 
which to the condition \ref{eq:BC} provides the map
\be
-{\gam^{-1}_{j-1}{(\blUi{i}{j}-\blUi{i-1}{j})}} \overset{\blC_j}{\mapsto} \blUi{i}{j} \quad \forall i\geq j
\ee
since $\blU_{i\geq j}^{j}$ satisfies the same recursion $\forall j$. As shown later during the induction, these maps are positive definite and thus invertible and the inverse gives
\begin{align}
-{\gam^{-1}_{j-1}{(\blUi{i}{j}-\blUi{i-1}{j})}}&=\blC_j^{-1} \blUi{i}{j} \quad\forall i\geq j\\
\leftrightarrow \blUi{i-1}{j}&=(\blC_j+\gam_{j-1})\blC_j^{-1} \blUi{i}{j}
\end{align}
which for $i=j$ and assuming $\gam_{j}>0$ and invertibility of $\blC_j$ gives
\be\label{eq:ReccursionStart}
\blC_{j+1}^{-1} \blUi{j+1}{j}=(\blC_{j+1}+\gam_{j})^{-1}\blUi{j}{j}=-{\gam^{-1}_{j}}{(\blUi{j+1}{j}-\blUi{j}{j})}.
\ee

With this in place consider the recursion \eqref{eq:REC} for $i=j$, substitute the condition \eqref{eq:BC} and replace the term $-{\gam^{-1}_{j}}{(\blUi{j+1}{j}-\blUi{j}{j})}$ with the expression derived in \eqref{eq:ReccursionStart} to relate $\blC_{j+1}$ to $\blC_{j}$ as
\be
(\blC_{j+1}+\gam_{j})^{-1}\blC_j + s \hgam_j s \blC_j =I_p
\ee
which gives the basic building block of the material s-fraction as
\be\label{eq:SFracRecBlock}
\blC_j(s)=\cfrac{1}{s\hgam_j + \cfrac{1}{\gam_j+\blC_{j+1}(s)}} ,
	\qquad {\rm Re}\{s\}>0.
\ee

We are now ready to perform the induction, where we show that indeed $\blC$'s are invertible and that the upper recursion gives the desired S-Fraction for $\blF_m$

\begin{itemize}[label=$\lozenge$, itemsep=2ex]
		\item \emph{Base Case}: For the case $j=m$ the condition $\blUi{m+1}{m}=0$ implies $\blC_{m+1}=0$ in \eqref{eq:SFracRecBlock}. Alternatively Equation~ \ref{eq:REC} for $i=j=m$ can be written as 
\be
\gam_{j}^{-1}\blC_m + s \hgam_j s \blC_m =I_p
\ee
which starts the S-fraction as
\be
\blC_m(s)=\cfrac{1}{s\hgam_m + \cfrac{1}{\gam_m}} , \qquad {\rm Re}\{s\}>0.
\ee
which is positive definite and thus invertible.
	\item \emph{Induction Step}: 
	Next for $m>j \geq 1$ we assume that $\blC_{j+1}$ is positive definite such that
\[
\blC_j(s)=\cfrac{1}{s\hgam_j + \cfrac{1}{\gam_j+\blC_{j+1}(s)}} 
\]
extends the s-fraction and ensures that $\blC_j$ is positive definite and invertible. The recursion ends with $j=1$ at which point \eqref{eq:BC} and \eqref{eq:REC} coincide with the full pencil.		 
	\end{itemize}

\end{proof}
\subsection{Extension to Gau\ss -Radau and Kreĭn-Nudelman quadratures}\label{ap:ext}

To obtain the continuous fraction expression for $ {\hat\blF^{\phi,\varphi}}_m(s)$, we return to the Gau{\ss} quadrature.
Comparing (\ref{eqn:line1}-\ref{eqn:linem}) with \eqref{eq:DefCFblock} we obtain
$$
\blC_i^{-1} \blU_i:= - \frac 1 {\gam_{i-1}} (\blU_i-\blU_{i-1}) ,
$$
i.e., in the finite-difference interpretation they connect the solution with its differences $\blC_i$, , i.e., can be interpreted to the Dirichlet-to-Neumann operators.

Condition \ref{eqn:linem} yields the last condition of \eqref{eq:DefCFblock}. Following \cite{KreinNudelman1973,KreinNudelman1989}, and extending Dirichlet-to-Neumann interpretation, we replace it with the "impedance boundary condition " 
\be\label{eq:KN}
\blC_{m+1}^{-1} := \phi\sqrt{s}
\ee
thus we obtain
\be\label{eq:S-fraction2}
	\hat \blF_m^{\phi,\varphi} (s)= \cfrac{1}{s\hgam_1+ \cfrac{1}{\gam_1 + \cfrac{1}{s\hgam_2 +\cfrac{1}{ \ddots \cfrac{1}{s \hgam_m + \cfrac{1}{ \gam_m+(\varphi+\phi\sqrt{s})^{-1}}} }}}} .\ee
Similar to the limiting transitions written in terms of the $\hat T^{\phi,\varphi}_m(s)$, we can write in terms of the continued fraction
\be\label{eq:lim1}\lim_{{\phi,\varphi}\to \infty}\hat \blF^{\phi,\varphi}_m(s)= \blF_m(s)\ee and
\be\label{eq:lim2}
	\lim_{\phi,\varphi\to 0}\hat \blF^{\phi,\varphi}_m(s)= \cfrac{1}{s\hgam_1+ \cfrac{1}{\gam_1 + \cfrac{1}{s\hgam_2 +\cfrac{1}{ \ddots \cfrac{1}{s \hgam_m } }}}} = \blTF_m(s),\ee
 where $\blTF_m(s)$ is the block Gau\ss -Radau quadrature as defined in \cite{zimmerling2025monotonicity}.
 
 Below we prove Proposition~\ref{prop:main}.
 \begin{proof}
 To prove the first statement, we first notice that $\sqrt{s}$ is a two-valued Stieltjes function with the branch cut on $\RR_-$. Then $\blC_i$ are M\"obius transforms, so they recursively map complex plane to itself and a Stieltjes function to another Stieltjes function. Thus recursively, the branch cut of $\sqrt{s}$ is mapping to the branch cut with the same location.

 { The second statement is proved similarly to \eqref{bound:GR} in \cite{zimmerling2025monotonicity}. } We write counterpart of recursion \eqref{eq:DefCFblock} for $\hat \blF^\phi_m(s)$ as
\begin{eqnarray*}
\hat \blF^\phi_m(s)=\blC_{1}(s), \\ \blC_i(s)=\cfrac{1}{s\hgam_i + \cfrac{1}{\gam_i+\blC_{i+1}(s)}}, \quad i=m,m-1,\ldots, 1, \\ \blC_{m+1}(s) = (\varphi+\phi\sqrt{s})^{-1}. \end{eqnarray*} 
{ By construction $\blC_{m+1}$ is strictly monotonic with respect to $\phi^{-1}$ and $\varphi^{-1}$, and then $\blC_i$ are strictly monotonic with respect to $\blC_{i+1}$ for $i=m,m-1,\ldots, 1$, which gives strict monotonicity of $\hat \blF^\phi_m(s)$ with respect to $\phi^{-1}$ and $\varphi^{-1}$. The Gau{\ss} quadrature is obtained in \eqref{eq:lim1} as the limit for $\phi^{-1}\to 0$ and $\varphi^{-1}\to 0$ so it gives the lower bound, and \eqref{eq:lim2} as the limit for $\phi^{-1}\to \infty$ and $\varphi^{-1}\to \infty$, so we obtain the Gau\ss -Radau quadrature as the upper bound.}

\end{proof}

\begin{remark}
{To extend the first statement of Proposition~\ref{prop:main}to $p>1$ one needs to extend the M\"obius transform to matrix-valued coefficients. Even though such an extension seems valid, we have not found a literature reference and leave it for future work.

 The extension of the second statement of Proposition~\ref{prop:main} to $p>1$ can be obtained by applying the monotonicity reasoning to the matrix-valued argument. {Let} $\blF_m(s,\blC_{m+1})=\blC_{1}(s)$ {be the lower recursion truncated by $\blC_{m+1}$}
\begin{eqnarray*}
 \blC_i(s)=\cfrac{1}{s\hgam_i + \cfrac{1}{\gam_i+\blC_{i+1}(s)}}, \quad i=m,m-1,\ldots, 1,\end{eqnarray*}
 We consider three truncating values
 \be
 \blC^{\scalebox{0.5}[0.4]{\rm Gau\ss }}_{m+1}(s) = 0 < \blC^{\scalebox{0.5}[0.4]{\rm Nudelman}}_{m+1}(s) = (\varphi+\phi\sqrt{s})^{-1} < \blC^{\scalebox{0.5}[0.4]{\rm Gau\ss -Radau}}_{m+1}(s) = \infty . 
 \ee
From 
\be
\cfrac{1}{s\hgam_i + \cfrac{1}{\gam_i+\blC_{i+1}(s)}} < \cfrac{1}{s\hgam_i + \cfrac{1}{\gam_i+\blC_{i+1}(s)+\tau}}, \text{ for $\tau >0$}
\ee
it directly follows that
\begin{eqnarray*}
\blF_m(s,\blC^{\scalebox{0.5}[0.4]{\rm Gau\ss }}_{m+1}(s))< \blF_{m+1}(s,\blC^{\scalebox{0.5}[0.4]{\rm Gau\ss }}_{m+2}(s))< \blF_m(s,\blC^{\scalebox{0.5}[0.4]{\rm Nudelman}}_{m+1}(s))\\< 
\blF_{m+1}(s,\blC^{\scalebox{0.5}[0.4]{\rm Gau\ss -Radau}}_{m+2}(s)) <
\blF_m(s,\blC^{\scalebox{0.5}[0.4]{\rm Gau\ss -Radau}}_{m+1}(s)).
\end{eqnarray*}

}
\end{remark}

\end{document}